\theoremstyle{plain}
\newtheorem{theorem}{Theorem}[section]
\newtheorem{lemma}[theorem]{Lemma}
\newtheorem{proposition}[theorem]{Proposition}
\newtheorem{corollary}[theorem]{Corollary}
\theoremstyle{definition}
\theoremstyle{remark}
\newcommand{\N}{\mathbb{N}}
\newcommand{\R}{\mathbb{R}}
\newcommand\numberthis{\addtocounter{equation}{1}\tag{\theequation}}
\DeclareMathOperator{\Sep}{sep}
\DeclareMathOperator{\Rel}{rel}
\DeclareMathOperator{\covol}{covol}
\DeclareMathOperator{\Span}{\overline{span}}
\title[Exponential frames and Riesz sequences  at the critical density]{Exponential frames and Riesz sequences \\ at the critical density}
\author{Ulrik Enstad}
\address{Department of Mathematics,
University of Oslo,
Moltke Moes vei 35,
0851 Oslo.}
\email{ubenstad@math.uio.no}
\author{Jordy Timo van Velthoven}
\address{Faculty of Mathematics,
University of Vienna, 
Oskar-Morgenstern-Platz 1,
1090 Vienna, Austria}
\email{jordy-timo.van-velthoven@univie.ac.at}
\subjclass[2020]{42A65, 42C15, 42C30}
\keywords{Beurling density, Exponential system, Frame, Riesz basis}
\begin{document}

\begin{abstract}
    We characterize exponential systems on sets of finite measure that form a frame or a Riesz sequence at the critical density. Namely, they are precisely those systems for which the underlying point set admits a weak limit that yields a Riesz basis.
    In combination with a recent result by Kozma, Nitzan and Olevskii, this shows that there exist sets that fail to possess a frame or Riesz sequence at the critical density, solving an open problem posed by Olevskii. As another consequence, we show that exponential frames and Riesz sequences over repetitive point sets (such as cut-and-project sets) at the critical density are already Riesz bases.
\end{abstract}

\maketitle

\section{Introduction}
For a set $S \subseteq \mathbb{R}^d$ of finite measure and a discrete set $\Lambda \subseteq \R^d$, we consider the system of exponential functions
\[
E(\Lambda) = \{e_{\lambda} : \lambda \in \Lambda \}, \quad e_{\lambda} (t) = e^{2 \pi i  \lambda \cdot t }, \quad t \in S .
\]
In this paper, we study the frame and Riesz sequence properties of the family $E(\Lambda)$ in the Hilbert space $L^2 (S)$. 
Recall that $E(\Lambda)$ is a Riesz basis for $L^2 (S)$ if it is simultaneously a frame and a Riesz sequence. Clearly, any orthogonal basis $E(\Lambda)$ is a Riesz basis for $L^2 (S)$. However, not every set $S$ admits an orthogonal basis, and the characterization of such sets is related to Fuglede's conjecture, see, e.g., \cite{fuglede1974commuting, lev2022fuglede}. Recently, Kozma, Nitzan and Olevskii \cite{kozma2023set} provided an example of a bounded set $S \subseteq \mathbb{R}$ not admitting any Riesz basis; see, e.g., \cite{kozma2015combining, kozma2016combining, debernardi2022riesz} for various (classes of) sets that do admit Riesz bases. On the other hand, it is known that every set of finite measure admits a frame of exponentials \cite{nitzan2016exponential} and hence, by the Feichtinger conjecture (see, e.g., \cite{bownik2024selector, casazza2006kadison, marcus2015interlacing}), also a Riesz sequence of exponentials. The question of whether any such set admits a frame at the ``critical density'' was posed as an open problem by Olevskii \cite{olevskiipresentation, olevskii2013sampling}. As an application of the main result of this paper, we solve this problem by showing that the set shown not to admit a Riesz basis in \cite{kozma2023set} does not even admit a frame or a Riesz sequence at the critical density.

Before stating our main result, we recall Landau's necessary density conditions \cite{La67, nitzan2012revisiting} for exponential frames and Riesz sequences. If $E(\Lambda)$ is a frame for $L^2 (S)$, then the lower Beurling density of $\Lambda$ satisfies
\[
D^- (\Lambda) :=  \liminf_{r \to \infty} \inf_{x \in \mathbb{R}^d} \frac{\# (\Lambda \cap B_r(x))}{|B_r(0)|} \geq |S|,
\]
where $B_r (x)$ denotes the Euclidean ball and $|U|$ the Lebesgue measure of a set $U \subseteq \mathbb{R}^d$.
On the other hand, if $E(\Lambda)$ is a Riesz sequence in $L^2 (S)$, then the upper Beurling density of $\Lambda$ satisfies 
\[
D^+ (\Lambda) := \limsup_{r \to \infty} \sup_{x \in \mathbb{R}^d} \frac{\# (\Lambda \cap B_r(x))}{|B_r(0)|} \leq |S|.
\]
In particular, any Riesz basis $E(\Lambda)$ for $L^2 (S)$ must have uniform density equal to the critical density $|S|$. When $S$ is an interval in $\mathbb{R}$, these necessary density conditions are also sufficient, except for frames and Riesz sequences at the critical density \cite{Be89, duffin1952class}; see \cite{ortegacerda2002fourier} for a complete characterization of sets yielding exponential frames at an interval.
The sufficient and necessary density conditions for exponential frames and Riesz sequences have served as the starting point for the investigation of sampling and interpolation problems in various other settings in complex and harmonic analysis, see, e.g., \cite{FuGrHa17, RaSt95, enstad2025dynamical, grochenig2018sampling, belov2023gabor, balan2006density, seip2004interpolation, Se93}.

It is not the case that every exponential frame or Riesz sequence at the critical density is a Riesz basis. However, our main theorem shows that this is true in an approximate sense: Namely, we characterize frames (resp.\ Riesz sequences) $E(\Lambda)$ at the critical density as exactly the frames (resp.\ Riesz sequences) for which some weak limit of translates of $\Lambda$ yields a Riesz basis. The notion of weak limits is here to be understood in Beurling's sense \cite{Be89}: A sequence $\{ \Lambda_n \}_{n \in \N}$ of sets $\Lambda_n \subseteq \R^d$ is said to \emph{converge weakly} to a set $\Gamma \subseteq \R^d$ if for every $r > 0$ and $\varepsilon > 0$ there exists $n_0 \in \N$ such that, for all $n \geq n_0$,
\begin{align*}
    \Lambda_n \cap B_r(0) \subseteq \Gamma + B_\varepsilon(0) \quad \text{and} \quad \Gamma \cap B_r(0) \subseteq \Lambda_n + B_\varepsilon(0).
\end{align*}
The set of all weak limits of translates of $\Lambda$ is denoted by $W(\Lambda)$, the so-called \emph{hull} of $\Lambda$.

The following theorem is the main result obtained in this paper.

\begin{theorem} \label{thm:main_intro}
      Let $S \subseteq \mathbb{R}^d$ be a set of finite measure and $\Lambda \subseteq \mathbb{R}^d$ be separated. Then the following assertions hold:
      \begin{enumerate}
          \item[(i)] Suppose $E(\Lambda)$ is a frame for $L^2 (S)$. Then $D^- (\Lambda) = |S|$ if and only if there exists $\Gamma \in W(\Lambda)$ such that $E(\Gamma)$ is a Riesz basis for $L^2 (S)$.
          \item[(ii)] Suppose $E(\Lambda)$ is a Riesz sequence in $L^2(S)$. Then $D^+(\Lambda) = |S|$ if and only if there exists $\Gamma \in W(\Lambda)$ such that $E(\Gamma)$ is a Riesz basis for $L^2(S)$.
      \end{enumerate}
\end{theorem}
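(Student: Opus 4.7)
For the easy directions of both parts, suppose some $\Gamma \in W(\Lambda)$ yields a Riesz basis $E(\Gamma)$. Landau's density theorem applied to the Riesz basis gives $D^-(\Gamma) = D^+(\Gamma) = |S|$. I would then verify the semicontinuity of Beurling densities under weak convergence, namely $D^-(\Lambda) \le D^-(\Gamma)$ and $D^+(\Gamma) \le D^+(\Lambda)$ for any $\Gamma \in W(\Lambda)$; these follow from the defining inclusions, which convert point counts for $\Gamma$ inside $B_r(0)$ into point counts for $\Lambda$ inside a nearby ball, up to a boundary error of size $o(|B_r|)$ controlled by the separation of $\Lambda$. Combined with the Landau bounds available for $\Lambda$ itself---$D^-(\Lambda) \ge |S|$ from the frame hypothesis in (i), or $D^+(\Lambda) \le |S|$ from the Riesz sequence hypothesis in (ii)---this forces the critical density of $\Lambda$.

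For the forward direction of (i), I would start by choosing $r_n \to \infty$ and $x_n \in \R^d$ witnessing $D^-(\Lambda) = |S|$, i.e.\ with $\#(\Lambda \cap B_{r_n}(x_n))/|B_{r_n}| \to |S|$. Separation makes $\{\Lambda - x_n\}$ precompact in the weak topology, and multiplication by the unimodular $e^{-2\pi i x_n \cdot t}$ is a unitary on $L^2(S)$ carrying $E(\Lambda)$ to $E(\Lambda - x_n)$, so each $E(\Lambda - x_n)$ is a frame with the same bounds as $E(\Lambda)$. A semicontinuity argument---testing the frame inequality against $f \in L^2(S)$ whose Fourier transform is continuous on $\R^d$ (automatic since $S$ has finite measure) and approximating $\sum_\gamma |\hat f(-\gamma)|^2$ by $\sum_{\lambda_n} |\hat f(-\lambda_n)|^2$ via the weak convergence---passes the lower frame bound to any subsequential limit $\Gamma \in W(\Lambda)$. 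The more delicate task is arranging $D^+(\Gamma) \le |S|$: inside each ball $B_{r_n}(x_n)$ of near-minimal density, an averaging/pigeonhole argument should produce an interior point $y_n$, bounded away from the boundary, such that every smaller concentric subball $B_R(y_n)$ has density at most $|S| + o(1)$; were a positive proportion of such subballs to exceed $|S| + \delta$, averaging would push $B_{r_n}(x_n)$ strictly above $|S|$, contradicting our choice. A diagonal extraction in $R$ then yields $\Gamma \in W(\Lambda)$ with $D^+(\Gamma) \le |S|$, and together with Landau this forces $D^-(\Gamma) = D^+(\Gamma) = |S|$.

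The final step---upgrading $E(\Gamma)$ from a frame at uniform critical density to a Riesz basis---is the genuine obstacle, since this implication fails for a generic frame. My plan is to strengthen the weak-limit construction so as to transfer a lower Riesz bound alongside the frame bound: track the canonical dual frames of $E(\Lambda - y_n)$, whose bounds are uniform because those of $E(\Lambda)$ are, pass to a joint weak limit, and use the critical-density equality to identify the dual with $E(\Gamma)$ itself, forcing the biorthonormality characteristic of a Riesz basis. Part (ii) follows by the dual strategy: choose $y_n$ realising $D^+(\Lambda) = |S|$, inherit the lower Riesz bound in the weak limit via the same semicontinuity principle, and refine the choice so that $D^-(\Gamma) \ge |S|$, promoting $E(\Gamma)$ from a Riesz sequence to a Riesz basis. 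In both parts the core difficulty is the same: producing in the limit the \emph{opposite} bound (Riesz versus frame) to the one that transfers from the hypothesis on $\Lambda$, and it is here that the hypothesis of critical density must do its real work.
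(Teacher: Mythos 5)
Your treatment of the easy directions (Landau's theorem for the Riesz basis $E(\Gamma)$ plus the monotonicity $D^-(\Lambda) \le D^-(\Gamma)$ and $D^+(\Gamma) \le D^+(\Lambda)$ for $\Gamma \in W(\Lambda)$) matches the paper and is fine. The gap is in the forward directions, and you have in fact located it yourself: after your construction you arrive at a frame (resp.\ Riesz sequence) $E(\Gamma)$ with $D^-(\Gamma)=D^+(\Gamma)=|S|$, but uniform critical density does \emph{not} upgrade such a system to a Riesz basis. Concretely, $E(\Z \cup \{1/2\})$ is a frame for $L^2[0,1]$ with uniform density $1$ that is not a Riesz basis, and $E(\Z \setminus \{0\})$ is a Riesz sequence with uniform density $1$ that is not complete; both sets are weak limits of their own translates along $x_n \in \Z$, so your selection of $x_n$ by near-minimal (resp.\ near-maximal) counting density can land exactly on such a $\Gamma$. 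The fallback plan you sketch for the frame case --- ``track the canonical dual frames, pass to a joint weak limit, and use the critical-density equality to identify the dual with $E(\Gamma)$'' --- is not an argument: the canonical dual $\{S_{\Lambda-x_n}^{-1}e_\lambda\}$ is not an exponential system, so Beurling weak limits do not apply to it, and no mechanism is offered for why the limit dual should be biorthogonal to $E(\Gamma)$. The same objection applies to ``refine the choice so that $D^-(\Gamma)\ge |S|$'' in part (ii): density alone cannot force completeness.

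What is missing is the correct selection criterion for the translates, and the quantitative identities that make it available. The paper does not choose $x_n$ by counting density at all. For (i) it uses the frame-measure identity $M^+(\Lambda)=|S|/D^-(\Lambda)$ (\Cref{thm:framemeasure}): at critical density the averages of $1-\langle e_\lambda, S_\Lambda^{-1}e_\lambda\rangle$ over balls have infimum zero, so the exceptional set $\Lambda_\varepsilon=\{\lambda: \langle e_\lambda,S_\Lambda^{-1}e_\lambda\rangle\le 1-\varepsilon\}$ has lower density zero and is not relatively dense; one then picks $x_n$ so that $\langle e_\lambda,S_\Lambda^{-1}e_\lambda\rangle>1-1/n$ for \emph{all} $\lambda\in\Lambda\cap B_n(x_n)$, and strong convergence $S_{\Lambda-x_n}^{-1}\to S_\Gamma^{-1}$ gives $\langle e_\gamma,S_\Gamma^{-1}e_\gamma\rangle=1$ for every $\gamma\in\Gamma$, i.e.\ the canonical Parseval frame is unit-norm and hence an orthonormal basis. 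For (ii) the analogous input is a density formula for uniformly minimal systems, $D^+(\Lambda)=\limsup_r\sup_x |B_r(x)|^{-1}\int_{B_r(x)}\|P_{\Lambda\cap B_r(x)}e_y\|^2\,dy$ (\Cref{prop:riesz-sequence-density}), which at critical density forces the sets $\{y:\|(I-P_\Lambda)e_y\|\ge\varepsilon\}$ to have arbitrarily small measure in some balls of every radius; choosing $x_n$ accordingly yields $P_\Gamma e_x=e_x$ for all $x$ in the limit, i.e.\ completeness. Without these two identities (or a substitute for them) your outline cannot close either implication.
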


\Cref{thm:main_intro} provides a far-reaching generalization of results by Seip \cite{seip1995on} for an interval $S$ in $\mathbb{R}$ to general (possibly unbounded) sets of finite measure. The proofs in \cite{seip1995on} use the sufficient density conditions for sampling and interpolation of certain entire functions due to Beurling \cite{Be89, seip2004interpolation}, which are not available beyond spectra being intervals. In contrast, our proof of \Cref{thm:main_intro} is entirely based on methods from Fourier analysis and frame theory and is essentially self-contained.

Our approach towards proving part (i) of \Cref{thm:main_intro} consists of showing that for a frame $E(\Lambda)$ for $L^2(S)$ with critical density $D^-(\Lambda) = |S|$, there exist points $\{x_n \}_{n \in \mathbb{N}}$ in $\mathbb{R}^d$ such that the exponential systems $E(\Lambda - x_n)$ approach a Riesz basis as $n\to \infty$; see \Cref{thm:frame-to-riesz-basis} (iii) for a precise statement. A key tool in establishing part (i) of \Cref{thm:main_intro} is an identity relating the notion of a frame measure \cite{balan2006density, balan2007measure} and Beurling density, cf. \Cref{thm:framemeasure}. In a similar manner, we prove part (ii) of \Cref{thm:main_intro} by showing that for a Riesz sequence $E(\Lambda)$ in $L^2(S)$ with critical density $D^+(\Lambda) = |S|$ there exist translates $\Lambda - y_n$ for which the exponential systems $E(\Lambda - y_n)$ approach a Riesz basis as $n \to \infty$; see \Cref{thm:riesz-sequence-to-riesz-basis} (iii) for a precise statement. A key tool in establishing part (ii) of \Cref{thm:main_intro} is a seemingly new identity for the Beurling density of a Riesz sequence, cf. \Cref{prop:riesz-sequence-density}. 

We next discuss two applications of \Cref{thm:main_intro}.

\subsection{Sets with no frames or Riesz sequences at the critical density}
Our first application of \Cref{thm:main_intro} is motivated by a recent result of Kozma, Nitzan and Olevskii. In \cite{kozma2023set}, they solved a longstanding problem by constructing a compact set $S \subseteq \mathbb{R}$ of positive measure with the property that $L^2(S)$ admits no Riesz basis of exponentials.
A question that naturally arises is whether every compact set still admits a frame at the critical density. In fact, this was posed as an open problem by Olevskii in \cite{olevskiipresentation, olevskii2013sampling}. A combination of \Cref{thm:main_intro} and \cite[Theorem 2]{kozma2023set} yields, in particular, the following answer to this open problem:

\begin{theorem} \label{thm:noframe_critical}
    The set  $S \subseteq \mathbb{R}$ from \cite{kozma2023set} has the following properties:
    \begin{enumerate}
        \item[(i)] There exists no discrete set $\Lambda \subseteq \mathbb{R}$ with $D^-(\Lambda) = |S|$ such that $E(\Lambda)$ is a frame for $L^2 (S)$.
        \item[(ii)] There exists no discrete set $\Lambda \subseteq \mathbb{R}$ with $D^+(\Lambda) = |S|$ such that $E(\Lambda)$ is a Riesz sequence in $L^2 (S)$.
    \end{enumerate}
\end{theorem}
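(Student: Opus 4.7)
The plan is to derive both (i) and (ii) by contradiction, using \Cref{thm:main_intro} as the bridge between ``frame (resp.\ Riesz sequence) at critical density'' and ``existence of an exponential Riesz basis'', and then invoking \cite[Theorem 2]{kozma2023set}, which asserts that $L^2(S)$ carries no exponential Riesz basis at all.

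For part (i), I would suppose toward a contradiction that some discrete $\Lambda \subseteq \mathbb{R}$ satisfies $D^-(\Lambda) = |S|$ and that $E(\Lambda)$ is a frame for $L^2(S)$. Since \Cref{thm:main_intro}(i) has a separation hypothesis on $\Lambda$, the first step is a reduction: as $E(\Lambda)$ is in particular a Bessel sequence, $\Lambda$ is necessarily a finite union of separated sets, and a standard thinning argument produces a separated subset $\Lambda' \subseteq \Lambda$ with $D^-(\Lambda') = |S|$ for which $E(\Lambda')$ is still a frame for $L^2(S)$. Applying \Cref{thm:main_intro}(i) to $\Lambda'$ then furnishes a weak limit $\Gamma \in W(\Lambda')$ such that $E(\Gamma)$ is a Riesz basis of $L^2(S)$, which directly contradicts \cite[Theorem 2]{kozma2023set}.

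For part (ii), the argument is parallel. Assume $\Lambda \subseteq \mathbb{R}$ is discrete with $D^+(\Lambda) = |S|$ and that $E(\Lambda)$ is a Riesz sequence in $L^2(S)$. Here separation is automatic: two points of $\Lambda$ cannot lie arbitrarily close, since otherwise the corresponding exponentials $e_\lambda, e_{\lambda'}$ would be nearly identical in $L^2(S)$ and violate the lower Riesz bound. Thus \Cref{thm:main_intro}(ii) applies directly and yields $\Gamma \in W(\Lambda)$ with $E(\Gamma)$ a Riesz basis of $L^2(S)$, again contradicting the Kozma--Nitzan--Olevskii result.

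\textbf{Main obstacle.} With \Cref{thm:main_intro} available, the deduction is essentially mechanical; the only mildly delicate point is ensuring the separation hypothesis in part (i), which I expect to be a routine reduction that the authors either perform explicitly or absorb into their setup. The substantive mathematics lies entirely in \Cref{thm:main_intro} and in the construction of $S$ from \cite{kozma2023set}, so \Cref{thm:noframe_critical} is best viewed as a clean corollary packaging these two ingredients.
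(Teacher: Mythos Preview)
Your proposal is correct and matches the paper's proof essentially line for line: both parts argue by contradiction, invoke \Cref{thm:main_intro} to produce a Riesz basis in the hull, and contradict \cite[Theorem 2]{kozma2023set}; the separation reduction in (i) and the automatic separation in (ii) are handled just as you describe. The only detail the paper makes explicit that you leave implicit is that the thinning step in (i) uses the boundedness of $S$ (citing \cite[Lemma 3.11]{seip1995on}), and that $D^-(\Lambda') = |S|$ is deduced a posteriori from $\Lambda' \subseteq \Lambda$ together with Landau's inequality for the frame $E(\Lambda')$.
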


\begin{proof}[Proof of \Cref{thm:noframe_critical}]
(i) Arguing by contradiction, we assume that there exists a discrete set $\Lambda \subseteq \mathbb{R}$ with density $D^-(\Lambda) = |S|$ such that $E(\Lambda)$ is a frame for $L^2 (S)$. If $\Lambda$ is separated, then \Cref{thm:main_intro} (i) implies the existence of $\Gamma \in W(\Lambda)$ such that $E(\Gamma)$ is a Riesz basis for $L^2(S)$. This contradicts \cite[Theorem 2]{kozma2023set}.

More generally, if $\Lambda$ is merely relatively separated, we may reduce the problem to the separated case as follows: Since $S \subseteq \mathbb{R}$ is bounded, there exists a separated subset $\Lambda' \subseteq \Lambda$ such that $E( \Lambda')$ is still a frame for $L^2 (S)$, see, e.g., \cite[Lemma 3.11]{seip1995on} for a proof that remains valid for general bounded sets. Since $\Lambda' \subseteq \Lambda$, we have $D^-(\Lambda') \leq |S|$, whereas $D^-(\Lambda') \geq |S|$ by the necessary density condition for the frame $E( \Lambda')$. Thus, \Cref{thm:main_intro} is applicable to $E(\Lambda')$.
\\~\\
(ii) The argument is similar to (i). Assuming there would be a discrete set $\Lambda \subseteq \mathbb{R}$ of density $D^+(\Lambda) = |S|$ such that $E(\Lambda)$ is a Riesz sequence, it follows that $\Lambda$ must be separated, and hence part (ii) of \Cref{thm:main_intro} would yield a Riesz basis $E(\Gamma)$ for $L^2 (S)$.
\end{proof}

It was recently shown by Bownik and the second named author that for every set $S \subseteq \mathbb{R}$ of finite measure and for every $\varepsilon > 0$, there exists  a set $\Lambda \subseteq \mathbb{R}$ of density $D^-(\Lambda) \leq (1+\varepsilon) |S|$ such that the system $E(\Lambda)$ is a frame for $L^2 (S)$, cf.\ \cite{bownik2025frame}. \Cref{thm:noframe_critical} shows that this result is optimal: It cannot be improved to a statement on frames at the critical density $D^-(\Lambda) = |S|$.

It is also worth mentioning the situation changes drastically if the frame property is replaced with the weaker property of completeness: As shown by Olevskii and Ulanovskii \cite[Theorem 1]{olevskii2011uniqueness}, every $S \subseteq \mathbb{R}$ of finite measure admits a set $\Lambda \subseteq \mathbb{R}$ of uniform density $D(\Lambda) = |S|$ such that the system $E(\Lambda)$ is complete in $L^2 (S)$.

\subsection{Frames and Riesz sequences over repetitive sets}

As our second application of \Cref{thm:main_intro}, we show that frames and Riesz sequences over repetitive (and more generally almost repetitive) point sets at the critical density are in fact already Riesz bases. Repetitivity is a regularity property of point sets studied in aperiodic order \cite{baake-grimm} and is satisfied, e.g., by point sets arising from primitive substitution rules and Meyer's cut-and-project sets \cite{meyer-book}. Almost repetitivity is a generalization of repetitivity that is suited for point sets not of finite local complexity.

In the proof we will employ the following characterization of almost repetitivity in terms of minimality of the associated hull dynamical system. Namely, a separated and relatively dense set $\Lambda \subseteq \R^d$ is almost repetitive if and only if $W(\Gamma) = W(\Lambda)$ for all $\Gamma \in W(\Lambda)$; see, e.g., \cite[Theorem 3.2]{lagarias-repetitive} and \cite[Theorem 3.11]{frettloh-almost-repetitive}.

\begin{theorem} \label{thm:repetitive}
    Let $S \subseteq \mathbb{R}^d$ be a set of finite measure. Let $\Lambda \subseteq \mathbb{R}^d$ be an almost repetitive set.
    Then the following assertions hold:
    \begin{enumerate}[(i)]
        \item If $E(\Lambda)$ is a frame for $L^2 (S)$ and $D^-(\Lambda) = |S|$, then $E(\Lambda)$ is a Riesz basis for $L^2 (S)$.
            \item If $E(\Lambda)$ is a Riesz sequence in $L^2 (S)$ and $D^+(\Lambda) = |S|$, then $E(\Lambda)$ is a Riesz basis for $L^2 (S)$.
    \end{enumerate}
\end{theorem}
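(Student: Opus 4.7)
My plan is to combine \Cref{thm:main_intro} with the dynamical characterization of almost repetitivity recalled just before the statement. I would handle (i) first and then observe that (ii) follows by the same scheme with the roles of frames and Riesz sequences interchanged.

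For (i), since $E(\Lambda)$ is a frame for $L^2(S)$ with $D^-(\Lambda) = |S|$, part (i) of \Cref{thm:main_intro} produces some $\Gamma \in W(\Lambda)$ such that $E(\Gamma)$ is a Riesz basis, and in particular a Riesz sequence with some lower bound $A > 0$. The key observation is then that almost repetitivity forces $W(\Gamma) = W(\Lambda)$. Since $\Lambda$ is trivially the weak limit of the constant sequence of its own untranslated copies, $\Lambda \in W(\Lambda) = W(\Gamma)$, so there exists a sequence $\{y_n\}_{n \in \N} \subseteq \R^d$ with $\Gamma - y_n \to \Lambda$ weakly.

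At this stage I would transfer the Riesz sequence bound of $E(\Gamma)$ to $E(\Lambda)$ in two steps. First, the map $f \mapsto e^{-2\pi i y_n \cdot t} f$ is a unitary on $L^2(S)$ carrying $E(\Gamma)$ onto $E(\Gamma - y_n)$, so each $E(\Gamma - y_n)$ is a Riesz sequence with the same lower bound $A$. Second, I would invoke the standard semicontinuity principle for exponential systems: if uniformly separated sets $\Gamma_n$ converge weakly to $\Lambda$ and each $E(\Gamma_n)$ is a Riesz sequence in $L^2(S)$ with common lower bound $A$, then $E(\Lambda)$ is a Riesz sequence in $L^2(S)$ with lower bound $A$. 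Combined with the hypothesis that $E(\Lambda)$ is already a frame, this gives the Riesz basis conclusion.

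Part (ii) proceeds by the same template: \Cref{thm:main_intro}(ii) delivers $\Gamma \in W(\Lambda)$ with $E(\Gamma)$ a Riesz basis, almost repetitivity yields a sequence with $\Gamma - y_n \to \Lambda$ weakly, and the lower frame bound of $E(\Gamma)$ then transfers across the unitary translations and the weak limit to a lower frame bound for $E(\Lambda)$. The main obstacle in both cases is the semicontinuity step across the weak limit. However, this is precisely the machinery already developed in the proof of \Cref{thm:main_intro} (as signalled by the introduction's phrasing that $E(\Lambda - x_n)$ and $E(\Lambda - y_n)$ \emph{approach} a Riesz basis), so the needed persistence lemma should be available from the body of the paper and can be invoked directly rather than re-established here.
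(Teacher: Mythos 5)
Your proposal is correct and follows essentially the same route as the paper: apply \Cref{thm:main_intro} to obtain $\Gamma \in W(\Lambda)$ with $E(\Gamma)$ a Riesz basis, use minimality of the hull ($W(\Gamma)=W(\Lambda)\ni\Lambda$) to conclude $\Lambda \in W(\Gamma)$, and then invoke the stability of frames and Riesz sequences under weak limits of translates, which is exactly \Cref{cor:stability_weaklimits}. The persistence lemma you anticipate needing is indeed already established in the paper, so no gap remains.
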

\begin{proof}
(i) Suppose $E(\Lambda)$ is a frame for $L^2(S)$ and $D^-(\Lambda) = |S|$. By \Cref{thm:main_intro} (i) there exists $\Gamma \in W(\Lambda)$ such that $E(\Gamma)$ is a Riesz basis for $L^2(S)$. Since $\Lambda$ is almost repetitive, we have that $\Lambda \in W(\Gamma)$. Since frames and Riesz sequences are stable under weak limits of translates (see, e.g., \Cref{cor:stability_weaklimits}), it follows that $E(\Lambda)$ must be a Riesz basis for $L^2(S)$.
\\~\\
(ii) Suppose that $E(\Lambda)$ is a Riesz sequence in $L^2(S)$ and $D^+(\Lambda) = |S|$. Then \Cref{thm:main_intro} (ii) implies that there exists $\Gamma \in W(\Lambda)$ such that $E(\Gamma)$ is a Riesz basis for $L^2(S)$. We have that $\Lambda \in W(\Gamma)$ as $\Lambda$ is almost repetitive. Since Riesz bases are stable under weak limits of translates (see, e.g., \Cref{cor:stability_weaklimits}), it follows that
$E(\Lambda)$ itself is a Riesz basis for $L^2(S)$.
\end{proof}

A class of cut-and-project sets in $\R^d$ can be constructed as follows: Let $p_1$ and $p_2$ denote the projections of $\R^d \times \R$ onto $\R^d$ and $\R$, respectively. Let $\Gamma$ be a lattice in $\R^d \times \R$ such that the restrictions of $p_1$ and $p_2$ to $\Gamma$ are injective, and such that $p_1(\Gamma)$ and $p_2(\Gamma)$ are dense in $\R^d$ and $\R$, respectively. Letting $I \subseteq \R$ be a half-open interval, the corresponding cut-and-project set is given by
\begin{equation}
    \Lambda(\Gamma,I) = \{ p_1(\gamma) : \gamma \in \Gamma, \; p_2(\gamma) \in I \} . \label{eq:cut-and-project}
\end{equation}
Exponential frames and Riesz sequences built from cut-and-project sets have been studied in, e.g., \cite{grepstad2018riesz, matei2009variant, matei2010simple, kozma2011exponential,  grepstad-multi}. In particular, it was shown by Grepstad and Lev in \cite[Theorem 1.1]{grepstad2018riesz} that for $\Lambda = \Lambda(\Gamma,I)$ as in \eqref{eq:cut-and-project} with $|I| \notin p_2(\Gamma)$ and a Riemann measurable set $S \subseteq \R^d$, the system of exponentials $E(\Lambda)$  cannot be a Riesz basis for $L^2(S)$. Since $\Lambda$ has uniform density equal to $|I|/\covol(\Gamma)$, the following strengthening of their result follows immediately from \Cref{thm:repetitive}.

\begin{theorem} \label{thm:cutandproject}
Let $\Lambda = \Lambda(\Gamma, I)$ be a cut-and-project set as in \eqref{eq:cut-and-project} such that $|I| \notin p_2(\Gamma)$. If $S \subseteq \R^d$ is a Riemann measurable set with
\[ \frac{|I|}{\covol(\Gamma)} = |S|, \]
then $E(\Lambda)$ cannot be a frame for $L^2(S)$, nor a Riesz sequence in $L^2(S)$.
\end{theorem}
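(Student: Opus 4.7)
The plan is to combine \Cref{thm:repetitive} with the obstruction of Grepstad and Lev \cite[Theorem 1.1]{grepstad2018riesz} in a single contradiction argument. First I would observe that a cut-and-project set $\Lambda = \Lambda(\Gamma, I)$ of the form \eqref{eq:cut-and-project} has uniform Beurling density equal to $|I|/\covol(\Gamma)$. Under the hypothesis of the theorem this common value coincides with $|S|$, so
\[
D^-(\Lambda) \;=\; D^+(\Lambda) \;=\; |S|,
\]
and any exponential system $E(\Lambda)$ that happens to be a frame or a Riesz sequence is automatically at the critical density.

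Next I would check that \Cref{thm:repetitive} is applicable, i.e.\ that $\Lambda$ is almost repetitive. For a model set built from a lattice $\Gamma \subseteq \R^d \times \R$ with $p_1,p_2$ injective on $\Gamma$ and dense images, together with a half-open window $I$, almost repetitivity is a well-known feature of the cut-and-project formalism (reducing to ordinary repetitivity in the finite local complexity case); one can quote the relevant statement from the aperiodic order literature rather than reproving it here. Separatedness and relative denseness of $\Lambda$, which are implicit prerequisites for the minimality characterization used in \Cref{thm:repetitive}, likewise follow from the standing assumptions on $\Gamma$.

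With these two points in place the conclusion is immediate. Suppose for contradiction that $E(\Lambda)$ is a frame for $L^2(S)$; since $D^-(\Lambda) = |S|$ and $\Lambda$ is almost repetitive, \Cref{thm:repetitive}(i) promotes $E(\Lambda)$ to a Riesz basis for $L^2(S)$, directly contradicting \cite[Theorem 1.1]{grepstad2018riesz} under the assumption $|I| \notin p_2(\Gamma)$. If instead $E(\Lambda)$ is a Riesz sequence in $L^2(S)$, then $D^+(\Lambda) = |S|$ and \Cref{thm:repetitive}(ii) yields a Riesz basis in the same way, producing the same contradiction.

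I do not anticipate a genuine obstacle: modulo the matching of densities, the entire argument consists of invoking \Cref{thm:repetitive} to upgrade either hypothetical frame or Riesz-sequence property to a Riesz basis and then citing the Grepstad--Lev obstruction. The only step that is not internal to the paper is the almost repetitivity of $\Lambda(\Gamma, I)$, and that is a classical fact about model sets that can be referenced rather than developed.
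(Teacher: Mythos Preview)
Your proposal is correct and matches the paper's own argument: the paper simply notes that $\Lambda$ has uniform density $|I|/\covol(\Gamma)$ and then states that the result follows immediately from \Cref{thm:repetitive}, with the Grepstad--Lev obstruction \cite[Theorem 1.1]{grepstad2018riesz} supplying the contradiction. The almost repetitivity of cut-and-project sets is likewise taken for granted in the paper (it is mentioned in the discussion preceding \Cref{thm:repetitive}), so your treatment of that point as a citable background fact is exactly in line with the paper's approach.
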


\subsection{Organization}
The paper is organized as follows. \Cref{sec:prelim} provides background on general notation, point sets and frames and Riesz sequences that is used throughout the main text. The stability of frames and Riesz sequences and the convergence of associated operators is discussed in \Cref{sec:stability}. The proofs of part (i) and part (ii) of \Cref{thm:main_intro} are given in \Cref{sec:frame} and \Cref{sec:riesz}, respectively.

\section{Preliminaries} \label{sec:prelim}

\subsection{General notation} Throughout this article, we denote  the Euclidean norm of a vector $x \in \mathbb{R}^d$ by $|x| := (|x_1|^2 + ... + |x_d|^2)^{1/2}$ and the associated ball of radius $r>0$ and center $x \in \mathbb{R}^d$ by $B_r (x) = \{ y \in \mathbb{R}^d : |x-y|< r \}$. We also often simply write $B_r$ for $B_r(0)$. The Lebesgue measure of a measurable set $U \subseteq \mathbb{R}^d$ is denoted by $|U|$ and we denote integration against this measure by $\int ... \; dx$.
The space of square-integrable functions on $U$ is denoted by $L^2 (U)$.

We write $e_\xi(t) = e^{2\pi i \, \xi \cdot t}$ for $\xi \in \R^d$ and $t \in \mathbb{R}^d$. For a set $S \subseteq \mathbb{R}^d$ of finite measure, the notation $\| \cdot \|$ and $\langle \cdot, \cdot \rangle$ will always be used to denote the norm and inner product on $L^2 (S)$, respectively. Norms and inner products for other spaces will be written with a subscript.
The Fourier transform $\widehat{f}$ of a function $f \in L^2 (S)$ is defined as
$
\widehat{f} (\xi) = \langle f, e_{\xi} \rangle
$.
We also write $\mathcal{F} : L^2 (S) \to L^2 (\mathbb{R}^d)$ for the unitary operator $f \mapsto \widehat{f}$.

\subsection{Point sets}

A set $\Lambda \subseteq \mathbb{R}^d$ is said to be \emph{relatively separated} if
\[
\Rel(\Lambda) := \sup_{x \in \mathbb{R}^d} \# (\Lambda \cap B_1 (x)) < \infty
\]
and is called \emph{separated} if 
\[
\Sep(\Lambda) := \inf_{\substack{\lambda, \lambda' \in \Lambda \\ \lambda \neq \lambda'}} |\lambda - \lambda'| > 0.
\]
Any separated set is relatively separated, and any relatively separated set is a finite union of separated sets. A set is relatively separated if and only if $D^+(\Lambda) < \infty$.

A set $\Lambda \subseteq \mathbb{R}^d$ is \emph{relatively dense} if there exists $r>0$ such that
\[
\mathbb{R}^d = \bigcup_{\lambda \in \Lambda} B_r (\lambda).
\]
Equivalently, a set $\Lambda$ is relatively dense whenever $D^-(\Lambda) > 0$.

\subsection{Weak limits}
A sequence $\{\Lambda_n\}_{n \in \mathbb{N}}$ of sets $\Lambda_n \subseteq \mathbb{R}^d$ is said to \emph{converge weakly} to a set $\Gamma \subseteq \mathbb{R}^d$ if the following two properties hold:
\begin{enumerate}[(1)]
    \item If $\gamma \in \Gamma$, then there exist $\lambda_n \in \Lambda_n$ such that $\lambda_n \to \gamma$ as $n \to \infty$;
    \item If $(n_k)_{k \in \mathbb{N}}$ is a subsequence of $\mathbb{N}$ and $\lambda_{n_k} \in \Lambda_{n_k}$ with $\lambda_{n_k} \to \gamma \in \mathbb{R}^d$ as $k \to \infty$, then $\gamma \in \Gamma$.
\end{enumerate}
This definition of weak convergence is equivalent to the one used in the introduction.

We denote by $W(\Lambda)$ the set of all weak limits of sequences of translates $\{ \Lambda - x_n \}_{n \in \N}$ of $\Lambda$. If $\Lambda$ is a relatively separated set, then every $\Gamma \in W(\Lambda)$ is also relatively separated, with $\mathrm{rel}(\Gamma) \leq \mathrm{rel}(\Lambda)$. Similarly, if $\Lambda$ is separated, then $\Gamma$ is also separated, with $\Sep(\Gamma) \geq \Sep(\Lambda)$. In addition, if $\Lambda$ is separated and $\Gamma \in W(\Lambda)$, then $D^- (\Lambda) \leq D^-(\Gamma)$ and $D^+ (\Lambda) \geq D^+ (\Gamma)$.

The following lemma is well-known. We provide a proof for completeness.

\begin{lemma}\label{lem:weak-convergence-separated}
Let $\Lambda \subseteq \R^d$ be separated and let $\Lambda_n = \Lambda - x_n$ be a sequence of translates that converges to a set $\Gamma \in W(\Lambda)$. Let $r > 0$ be such that $\Gamma \cap \partial B_r(0) = \emptyset$. Then there exists $n_0, k \in \N$ such that for every $n \geq n_0$ there exist $\mu_n^{(j)} \in \Lambda_n$ and $\gamma^{(j)} \in \Gamma$, $1 \leq j \leq k$, with
\begin{align*}
    \Lambda_n \cap B_r(0) = \{ \mu_n^{(1)}, \ldots, \mu_n^{(k)} \}, \quad \Gamma \cap B_r(0) = \{ \gamma^{(1)}, \ldots, \gamma^{(k)} \},
\end{align*}
and $\mu_n^{(j)} \to \gamma^{(j)}$ as $n \to \infty$ for each $1 \leq j \leq k$.
\end{lemma}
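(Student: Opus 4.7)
The plan is to exploit two facts in tandem: the separation of $\Gamma$ (inherited from $\Lambda$) and the boundary gap condition $\Gamma \cap \partial B_r(0) = \emptyset$. Since $\Gamma$ is separated, the set $\Gamma \cap \overline{B_{r+1}(0)}$ is finite, so I can set $k := \#(\Gamma \cap B_r(0))$ and enumerate $\Gamma \cap B_r(0) = \{\gamma^{(1)},\ldots,\gamma^{(k)}\}$. The gap condition then lets me pick $\delta > 0$ so small that every $\gamma \in \Gamma \cap \overline{B_{r+1}(0)}$ satisfies $\big| |\gamma| - r \big| > \delta$; in other words, no point of $\Gamma$ near $B_r(0)$ lies in the annulus $B_{r+\delta}(0) \setminus B_{r-\delta}(0)$.

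Next I fix a tolerance $\varepsilon > 0$ smaller than $\delta/2$, smaller than $\Sep(\Lambda)/4$, and smaller than half the minimum pairwise distance between the $\gamma^{(j)}$. Applying the definition of weak convergence with this $\varepsilon$ and radius $r+1$ yields some $n_0$ such that, for all $n \geq n_0$, both inclusions
\[
\Gamma \cap B_{r+1}(0) \subseteq \Lambda_n + B_\varepsilon(0), \qquad \Lambda_n \cap B_{r+1}(0) \subseteq \Gamma + B_\varepsilon(0)
\]
hold. The first inclusion, applied to each $\gamma^{(j)}$, produces a point $\mu_n^{(j)} \in \Lambda_n$ with $|\mu_n^{(j)} - \gamma^{(j)}| < \varepsilon$. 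Since $\varepsilon < \delta/2 < \delta$ and $\gamma^{(j)} \in B_{r-\delta}(0)$, the point $\mu_n^{(j)}$ automatically lies in $B_r(0)$. The points $\mu_n^{(1)},\ldots,\mu_n^{(k)}$ are distinct because the $\varepsilon$-balls around the $\gamma^{(j)}$ are pairwise disjoint.

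The crux is then to rule out additional points: suppose $\mu \in \Lambda_n \cap B_r(0)$. By the second inclusion there exists $\gamma \in \Gamma$ with $|\mu - \gamma| < \varepsilon$, whence $|\gamma| < r + \varepsilon < r + \delta$. By the choice of $\delta$ this forces $|\gamma| < r-\delta$, so $\gamma \in B_r(0)$ and $\gamma = \gamma^{(j)}$ for some $j$. Then $|\mu - \mu_n^{(j)}| < 2\varepsilon < \Sep(\Lambda_n)/2$, so separation forces $\mu = \mu_n^{(j)}$. Hence $\Lambda_n \cap B_r(0) = \{\mu_n^{(1)},\ldots,\mu_n^{(k)}\}$ for every $n \geq n_0$.

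Finally, the convergence $\mu_n^{(j)} \to \gamma^{(j)}$ falls out by running the same argument with an arbitrary smaller $\varepsilon' > 0$: one obtains $n_0' \geq n_0$ with the property that, for $n \geq n_0'$, the enumeration produced by tolerance $\varepsilon'$ must coincide with the one produced by $\varepsilon$ (both are forced by the geometry of $\Gamma \cap B_r(0)$), and in particular $|\mu_n^{(j)} - \gamma^{(j)}| < \varepsilon'$ for all $j$. The main technical obstacle is the possible appearance of ``boundary points'' of $\Lambda_n$ that are close to some $\gamma \in \Gamma$ lying just outside $B_r(0)$; the assumption $\Gamma \cap \partial B_r(0) = \emptyset$, combined with the local finiteness of $\Gamma$, is precisely what disposes of this concern by supplying the uniform gap $\delta$.
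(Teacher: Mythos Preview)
Your proof is correct and follows essentially the same outline as the paper's: enumerate $\Gamma\cap B_r(0)$, use the boundary gap to get small neighborhoods of the $\gamma^{(j)}$ inside $B_r(0)$, match each $\gamma^{(j)}$ with a nearby $\mu_n^{(j)}\in\Lambda_n$, and then rule out additional points of $\Lambda_n\cap B_r(0)$. The only notable difference is that you work directly with the two-sided $\varepsilon$-inclusion definition of weak convergence, which makes the ``no extra points'' step (your use of the inclusion $\Lambda_n\cap B_{r+1}(0)\subseteq\Gamma+B_\varepsilon(0)$ together with the annulus gap $\delta$) more explicit than the paper's terser sequential argument; the paper instead invokes property~(1) to get $\mu_n^{(j)}\to\gamma^{(j)}$ immediately and handles the exclusion of extra points more briefly.
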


\begin{proof}
The limit set $\Gamma$ must also be separated, and the set $\Gamma \cap B_r$ is finite, say, $\Gamma \cap B_r = \{ \gamma^{(1)}, \ldots, \gamma^{(k)} \}$. Since $\Gamma$ is separated, we may find $\varepsilon > 0$ such that $\Gamma \cap B_\varepsilon(\gamma^{(j)}) = \{ \gamma^{(j)} \}$ for each $1 \leq j \leq k$ and, using that $\Gamma \cap \partial B_r(0) = \emptyset$, we have $B_\varepsilon(\gamma^{(j)}) \subseteq B_r(0)$. Since $\Lambda_n \to \Gamma$, we may find $\mu_n^{(j)} \in \Lambda_n$ such that $\mu_n^{(j)} \to \gamma^{(j)}$ as $n \to \infty$. Then there exists $n_0 \in \N$ such that $\mu_n^{(j)} \in B_\varepsilon(\gamma^{(j)}) \subseteq B_r(0)$ for $n \geq n_0$ so $\{ \mu_n^{(1)}, \ldots, \mu_n^{(k)} \} \subseteq \Lambda_n \cap B_r(0)$. Since $\Gamma$ is separated, the sets $\Lambda_n \cap B_\varepsilon (\gamma^{(j)})$ cannot contain more than one element for $n$ sufficiently large, so after possibly enlarging $n_0$, we have that $\{ \mu_n^{(1)}, \ldots, \mu_n^{(k)} \} = \Lambda_n \cap B_r(0)$.
\end{proof}

\subsection{Frames, Riesz sequences and operators} 
Given a discrete set $\Lambda \subseteq \mathbb{R}^d$ and a set $S \subseteq \mathbb{R}^d$ of finite measure, we consider the exponential system $E(\Lambda)$ in $L^2 (S)$.
We denote the associated \emph{frame operator} of the system $E(\Lambda)$ by
\[
S_{ \Lambda} := \sum_{\lambda \in \Lambda} \langle \cdot, e_{\lambda}  \rangle e_{\lambda} .
\]
The system $E(\Lambda)$ is a \emph{Bessel sequence} if and only if $S_{\Lambda}$ is bounded on $L^2 (S)$. If $E(\Lambda)$ is a Bessel sequence, then $\Lambda$ is relatively separated, and its associated frame operator satisfies the \emph{covariance relation}:
\begin{align} \label{eq:covariance}
    S_{\Lambda} (e_{x} f) = e_x S_{\Lambda - x} f, \quad  x \in \mathbb{R}^d, \; f \in L^2 (S).
\end{align}
Similarly, if $P_{\Lambda}$ denotes the orthogonal projection onto $\Span E(\Lambda)$, then
\begin{align} \label{eq:covariance_projection}
P_{\Lambda} (e_x f) = e_x P_{\Lambda - x} f ,  \quad x \in \mathbb{R}^d, \; f \in L^2 (S).
\end{align}

A system $E(\Lambda)$ is a \emph{frame} for $L^2 (S)$ whenever $S_{\Lambda}$ is bounded and invertible on $L^2 (S)$. A frame is called a \emph{Parseval frame} for $L^2 (S)$ whenever $S_{\Lambda}$ is the identity operator.
If $E(\Lambda)$ is a frame, then the system $\{S^{-1}_{\Lambda} e_{\lambda} \}_{\lambda \in \Lambda}$ is also a frame, called the \emph{canonical dual frame} of $E(\Lambda)$. Similarly, the system $\{S^{-1/2}_{\Lambda} e_{\lambda} \}_{\lambda \in \Lambda}$ is a Parseval frame whenever $E(\Lambda)$ is a frame. If $E(\Lambda)$ is a frame for $L^2 (S)$, then it follows from \eqref{eq:covariance} that its inverse frame operator satisfies
\begin{align} \label{eq:covariance_inverse}
S_{\Lambda}^{-1} (e_{x} f) = e_x S^{-1}_{\Lambda - x} f,  \quad x \in \mathbb{R}^d, \; f \in L^2 (S).
\end{align}

The system $E(\Lambda)$ is a \emph{Riesz sequence} in $L^2 (S)$ if there exist $0 < A \leq B < \infty$ such that
\[
A \| c \|^2_{\ell^2(\Lambda)} \leq \bigg\| \sum_{\lambda \in \Lambda} c_{\lambda} e_{\lambda} \bigg\|^2 \leq B \| c \|^2_{\ell^2 (\Lambda)}
\]
for all $c \in \ell^2 (\Lambda)$. Any Riesz sequence is, in particular, a Bessel sequence. A Riesz sequence that is complete is said to be a \emph{Riesz basis}. A Riesz basis is also a frame.

\section{Stability of frames and Riesz sequences and convergence of operators} \label{sec:stability}
This section is devoted to showing that frames and Riesz sequences are stable under weak limits of translates, and that their associated operators converge in the strong operator topology.

In what follows, we denote by $S \subseteq \mathbb{R}^d$ a set of finite measure. We view $L^2(S)$ as a closed subspace of $L^2(\R^d)$ via extending functions by zero. We will repeatedly use a dense subspace of $L^2 (S)$ defined as follows. If $P : L^2 (\mathbb{R}^d) \to L^2 (\mathbb{R}^d)$ denotes the orthogonal projection onto the closed subspace $\mathcal{F} (L^2 (S)) \subseteq L^2 (\mathbb{R}^d)$, then we define 
\[
\mathcal{D} := \mathcal{F}^{-1} P C_c (\mathbb{R}^d). 
\]
Note that $\mathcal{D}$ is dense in $L^2(S)$.

For proving that frames are stable under weak limits of translates, we will use the following simple lemma involving the local maximal function 
\[
Mf (x) = \sup_{y \in B_1 (0)} |f(x - y)|, \quad x \in \mathbb{R}^d,
\]
of a function $f \in L^2 (\mathbb{R}^d)$.

\begin{lemma} \label{lem:amalgam}
    Let $f \in C (\mathbb{R}^d)$ be a function such that $Mf \in L^2 (\mathbb{R}^d)$ and $r > 1$. If $\Lambda \subseteq \mathbb{R}^d$ is a relatively separated set, then
    \[
    \sum_{\lambda \in \Lambda \setminus B_r (0)} |f(\lambda)|^2 \leq \frac{\Rel(\Lambda)
    }{|B_1 (0)|} \int_{\mathbb{R}^d \setminus B_{r-1} (0)} |Mf(x)|^2 \; dx.
    \]
\end{lemma}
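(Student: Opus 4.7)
The plan is to exploit the pointwise bound $|f(\lambda)|\le Mf(x)$ that holds for every $x$ in the unit ball around $\lambda$, which is immediate from the definition of $Mf$: indeed, for $x\in B_1(\lambda)$ the vector $y:=x-\lambda$ lies in $B_1(0)$ and satisfies $f(x-y)=f(\lambda)$. Integrating this over $B_1(\lambda)$ gives the local estimate
\[
|B_1(0)|\,|f(\lambda)|^2 \;\le\; \int_{B_1(\lambda)} |Mf(x)|^2 \, dx .
\]

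Next I would sum this inequality over $\lambda\in\Lambda\setminus B_r(0)$ and interchange sum and integral (all integrands are non-negative, so Tonelli applies) to obtain
\[
|B_1(0)| \sum_{\lambda\in\Lambda\setminus B_r(0)} |f(\lambda)|^2 \;\le\; \int_{\R^d} \Bigl(\sum_{\lambda\in\Lambda\setminus B_r(0)} \mathds{1}_{B_1(\lambda)}(x)\Bigr) |Mf(x)|^2 \, dx.
\]

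The two remaining points are purely geometric. First, for any fixed $x\in\R^d$ the number of $\lambda\in\Lambda$ with $x\in B_1(\lambda)$ equals $\#(\Lambda\cap B_1(x))\le \Rel(\Lambda)$, so the bracketed sum is bounded by $\Rel(\Lambda)$. Second, if $x\in B_{r-1}(0)$ and $\lambda\in B_1(x)$, then $|\lambda|<|x|+1<r$, so such a $\lambda$ cannot belong to $\Lambda\setminus B_r(0)$; hence the integrand vanishes on $B_{r-1}(0)$ and the integration is effectively restricted to $\R^d\setminus B_{r-1}(0)$. Combining these two observations yields the claimed inequality.

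There is no real obstacle here: the argument is a standard ``Wiener-type'' localization estimate, and the only place where one needs to be careful is the choice of $r-1$ in the exclusion radius, which is dictated precisely by the triangle inequality in the second geometric point above, and requires the hypothesis $r>1$ so that $B_{r-1}(0)$ is a well-defined non-empty open ball.
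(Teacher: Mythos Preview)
Your argument is correct. The paper states this lemma without proof, treating it as a standard amalgam-type estimate; the proof you supply is exactly the expected one, and every step is valid as written.
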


\begin{lemma} \label{lem:densesubspace}
  Let $S \subseteq \mathbb{R}^d$ be a set of finite measure.
  If $f \in \mathcal{D}$, then its Fourier transform $\widehat{f}$ satisfies $M \widehat{f} \in L^2 (\mathbb{R}^d)$ .
\end{lemma}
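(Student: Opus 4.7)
The strategy is to realize $\widehat{f}$ as a convolution of $\widehat{\chi_S}$ with a compactly supported kernel, and then move the unit-ball supremum defining $M\widehat{f}$ inside the integral at the cost of only enlarging the (compact) integration domain.

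First I would unpack the definition of $\mathcal{D}$: by construction $f = \mathcal{F}^{-1}(Pg)$ for some $g \in C_c(\R^d)$, and since $L^2(S) \subseteq L^2(\R^d)$ is the range of multiplication by $\chi_S$, the projection $P$ onto $\mathcal{F}(L^2(S))$ equals $\mathcal{F} M_{\chi_S} \mathcal{F}^{-1}$. Hence $f = \chi_S \check{g}$ with $\check{g} = \mathcal{F}^{-1} g$, and a Fubini computation (valid since both $\chi_S$ and $g$ lie in $L^1(\R^d)$) gives $\widehat{f} = \widehat{\chi_S} \ast g$. Setting $K := \supp g$, for any $\xi \in \R^d$ and $y \in B_1(0)$ I then estimate
\[
|\widehat{f}(\xi - y)| \leq \|g\|_\infty \int_K |\widehat{\chi_S}(\xi - y - \eta)| \, d\eta,
\]
and the substitution $z = y + \eta$ enlarges the integration domain to the fixed compact set $B_1(0) + K$ while removing the dependence on $y$. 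This yields the pointwise bound
\[
M\widehat{f}(\xi) \leq \|g\|_\infty \bigl( |\widehat{\chi_S}| \ast \chi_{B_1(0) + K} \bigr)(\xi).
\]
Since $\widehat{\chi_S} \in L^2(\R^d)$ by Plancherel (as $|S| < \infty$) and $\chi_{B_1(0) + K} \in L^1(\R^d)$ by compactness, Young's convolution inequality closes the argument with the explicit bound $\|M\widehat{f}\|_2 \leq \|g\|_\infty \, |S|^{1/2} \, |B_1(0) + K|$.

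I do not anticipate any serious obstacle. The only mildly subtle point is commuting the $y$-supremum past the integral, and this is made free by the change of variables above; beyond that, the argument is a short combination of Plancherel with Young's inequality.
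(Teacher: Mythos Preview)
Your proof is correct and essentially the same as the paper's: both represent $\widehat{f}$ as the convolution $g * \widehat{\chi_S}$ (the paper writes it as $h * (\widehat{\mathds{1}_S})^*$, which is the same thing since $\chi_S$ is real), absorb the local maximal function into the compactly supported factor, and finish with Young's inequality. The only cosmetic difference is that the paper quotes the pointwise bound $M(h * \widehat{\chi_S}) \leq Mh * |\widehat{\chi_S}|$ directly, whereas you unfold this via the change of variables $z = y + \eta$; since $Mh \leq \|h\|_\infty \,\chi_{B_1(0)+K}$ for $h \in C_c(\R^d)$ with $K = \supp h$, your bound is exactly what the paper's inequality gives after one more trivial estimate.
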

\begin{proof}
The closed subspace $\mathcal{F}(L^2 (S)) \subseteq L^2 (\mathbb{R}^d)$ is invariant under translation, hence the orthogonal projection $P$ from $L^2 (\mathbb{R}^d)$ onto $\mathcal{F}(L^2 (S))$ commutes with translation. Therefore, 
if $f \in \mathcal{D}$, so that $\widehat{f} = P h$ for some $h \in C_c (\mathbb{R}^d)$, a direct calculation gives
\begin{align*}
\widehat{f}  (\xi)  = \langle \widehat{f} , \widehat{\mathds{1}_S} (\cdot - \xi) \rangle_{L^2 (\mathbb{R}^d)}  = \langle h , (P \widehat{\mathds{1}_S}) (\cdot - \xi) \rangle_{L^2 (\mathbb{R}^d)} = \big(h \ast (\widehat{\mathds{1}_S})^* \big)(\xi),
\end{align*}
where $\ast$ denotes convolution and where $(\widehat{\mathds{1}_S})^* (x) := \overline{\widehat{\mathds{1}_S} (-x)}$ for $x \in \mathbb{R}^d$.
Using that $M(h \ast (\widehat{\mathds{1}_S})^*) \leq M h \ast |\widehat{\mathds{1}_S}|^*$, we thus obtain
\[
\| M \widehat{f} \|_{L^2 (\mathbb{R}^d)}  \leq \| M h \|_{L^1 (\mathbb{R}^d)} \| (\widehat{\mathds{1}_S})^{\ast} \|_{L^2 (\mathbb{R}^d)} =  \| M h \|_{L^1 (\mathbb{R}^d)} |S| < \infty,
\]
which settles the claim.
\end{proof}

\begin{proposition} \label{prop:sot}
 Let $S \subseteq \mathbb{R}^d$ be a set of finite measure and $\Lambda \subseteq \mathbb{R}^d$ be separated.
 Let $\{x_n\}_{n \in \mathbb{N}}$ be a sequence in $\mathbb{R}^d$ such that $\Lambda - x_n$ converges weakly to $\Gamma \in W(\Lambda)$. Then the following assertions hold:
\begin{enumerate}
    \item[(i)] If $E(\Lambda)$ is a Bessel sequence in $L^2 (S)$, then the frame operators $S_{\Lambda - x_n}$ converge in the strong operator topology to the frame operator $S_{\Gamma}$.
    \item[(ii)] If $E(\Lambda)$ is a frame for $L^2 (S)$, then the inverse frame operators $S^{-1}_{\Lambda - x_n}$ converge in the strong operator topology to the inverse frame operator $S^{-1}_{\Gamma}$.
    \item[(iii)] If $E(\Lambda)$ is a Riesz sequence in $L^2 (S)$, then the projections $P_{\Lambda - x_n}$ converge in the strong operator topology to the projection $P_{\Gamma}$.
\end{enumerate}
\end{proposition}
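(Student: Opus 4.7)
The overall strategy is to prove each of the three convergence statements first on the dense subspace $\mathcal{D} \subseteq L^2(S)$ and then extend to $L^2(S)$ by density. The key uniform bound comes from the observation that multiplication by $e_{x_n}$ is a unitary on $L^2(S)$, so by the covariance relation \eqref{eq:covariance} the operators $S_{\Lambda - x_n}$ and $S_\Lambda$ are unitarily equivalent; in particular $\|S_{\Lambda - x_n}\| = \|S_\Lambda\|$ and, when $E(\Lambda)$ is a frame, $\|S_{\Lambda - x_n}^{-1}\| = \|S_\Lambda^{-1}\|$.

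For part (i), I would fix $f \in \mathcal{D}$, so that $\widehat{f}$ is continuous with $M\widehat{f} \in L^2(\R^d)$ by \Cref{lem:densesubspace}. Writing $S_{\Lambda - x_n} f = \sum_{\mu \in \Lambda - x_n} \widehat{f}(\mu) e_\mu$ and splitting at a radius $r$, the tail is controlled uniformly in $n$ by combining Bessel's inequality for the synthesis operator (with uniform Bessel constant $B = \|S_\Lambda\|$) and \Cref{lem:amalgam}:
\[
\Big\| \sum_{\mu \in (\Lambda - x_n) \setminus B_r} \widehat{f}(\mu) e_\mu \Big\|^2 \leq B \cdot \frac{\Rel(\Lambda)}{|B_1(0)|} \int_{\R^d \setminus B_{r-1}} |M\widehat{f}(x)|^2 \, dx,
\]
and the same bound holds with $\Gamma$ in place of $\Lambda - x_n$ since $\Rel(\Gamma) \leq \Rel(\Lambda)$. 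Both tails therefore become arbitrarily small as $r \to \infty$, uniformly in $n$. For the ``head'' ($|\mu|, |\gamma| < r$), \Cref{lem:weak-convergence-separated} provides a bijective pairing $\mu_n^{(j)} \leftrightarrow \gamma^{(j)}$, $1 \leq j \leq k$, valid for all sufficiently large $n$, with $\mu_n^{(j)} \to \gamma^{(j)}$. Continuity of $\widehat{f}$ yields $\widehat{f}(\mu_n^{(j)}) \to \widehat{f}(\gamma^{(j)})$, while dominated convergence and $|S| < \infty$ yield $\|e_{\mu_n^{(j)}} - e_{\gamma^{(j)}}\| \to 0$. A three-$\varepsilon$ argument then gives $S_{\Lambda - x_n} f \to S_\Gamma f$, and density of $\mathcal{D}$ combined with uniform boundedness of $\{S_{\Lambda - x_n}\}_n$ extends the SOT convergence to all of $L^2(S)$.

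Part (ii) will follow from (i) via the resolvent identity
\[
S_{\Lambda - x_n}^{-1} - S_\Gamma^{-1} = S_{\Lambda - x_n}^{-1} \big( S_\Gamma - S_{\Lambda - x_n} \big) S_\Gamma^{-1}.
\]
Before applying it, I first observe that $E(\Gamma)$ is itself a frame with the uniform bounds of $\{E(\Lambda - x_n)\}_n$: the inequality $A\|f\|^2 \leq \langle S_{\Lambda - x_n} f, f \rangle \leq B\|f\|^2$ passes to the limit using the SOT convergence of (i), so $S_\Gamma$ is invertible. From the identity and the uniform bound $\|S_{\Lambda - x_n}^{-1}\| = \|S_\Lambda^{-1}\|$,
\[
\|(S_{\Lambda - x_n}^{-1} - S_\Gamma^{-1}) f\| \leq \|S_\Lambda^{-1}\| \cdot \|(S_\Gamma - S_{\Lambda - x_n})(S_\Gamma^{-1} f)\| \xrightarrow{n \to \infty} 0,
\]
where the right-hand side tends to $0$ by (i) applied to the vector $S_\Gamma^{-1} f$.

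For part (iii), I would first verify that $E(\Gamma)$ is a Riesz sequence with the same bounds $A \leq B$ as the $E(\Lambda - x_n)$, by approximating a finite linear combination $g = \sum_j c_j e_{\gamma^{(j)}}$ by $g_n = \sum_j c_j e_{\mu_n^{(j)}}$ via \Cref{lem:weak-convergence-separated}, and passing the Riesz inequality $A \sum_j |c_j|^2 \leq \|g_n\|^2 \leq B \sum_j |c_j|^2$ to the limit using $g_n \to g$ in $L^2(S)$. Consequently the self-adjoint operators $S_{\Lambda - x_n}$ and $S_\Gamma$, which vanish on the orthogonal complements of their respective closed spans, have spectra contained in $\{0\} \cup [A, B]$. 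Choosing a continuous function $\chi \colon [0, B] \to [0, 1]$ with $\chi \equiv 0$ on a neighborhood of $0$ and $\chi \equiv 1$ on $[A, B]$, the spectral theorem gives $\chi(S_{\Lambda - x_n}) = P_{\Lambda - x_n}$ and $\chi(S_\Gamma) = P_\Gamma$. The conclusion then follows from the standard fact that SOT convergence of a uniformly bounded sequence of self-adjoint operators is preserved under the continuous functional calculus: SOT convergence survives multiplication of uniformly bounded sequences, hence polynomials, and uniform polynomial approximation of $\chi$ on $[0, B]$ extends the statement to continuous functions. The main obstacle I foresee is precisely the uniform tail estimate in part (i), where the interplay of \Cref{lem:amalgam} and \Cref{lem:densesubspace} is essential for converting the geometric relative separation of $\Lambda - x_n$ into an $n$-uniform bound; once (i) is in place, parts (ii) and (iii) are comparatively soft operator-theoretic consequences.
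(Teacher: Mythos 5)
Your argument is correct, and parts (i) and (ii) follow essentially the paper's route: the same uniform tail estimate from \Cref{lem:amalgam} and \Cref{lem:densesubspace}, the same head-matching via \Cref{lem:weak-convergence-separated}, and the same resolvent identity for (ii). The only cosmetic difference in (i) is that you estimate $\|S_{\Lambda - x_n}f - S_\Gamma f\|$ directly at the vector level (using the synthesis-operator bound $\|\sum_\mu c_\mu e_\mu\|^2 \le B\|c\|_{\ell^2}^2$ on the tails), whereas the paper controls the quadratic forms $\langle S_{\Lambda-x_n}f, f\rangle$ and then invokes polarization plus uniform boundedness; both are equally valid and of comparable length.

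Part (iii) is where you genuinely diverge. The paper decomposes $f$ into its components in $\Span E(\Gamma)$ and $(\Span E(\Gamma))^\perp$: on the span it approximates $f$ by finite combinations and matches frequencies via \Cref{lem:weak-convergence-separated} to get $\|P_{\Lambda - x_n}f - f\| \le \|f_n - f\| \to 0$, and on the orthogonal complement it uses the uniform lower frame bound of $E(\Lambda - x_n)$ on its span together with part (i) to force $\|P_{\Lambda - x_n}f\| \to 0$. You instead first establish that $E(\Gamma)$ is a Riesz sequence with the same bounds (by a direct limit of the Riesz inequality along matched finite sections, which does not presuppose (iii), so there is no circularity with \Cref{cor:stability_weaklimits}), and then realize $P_{\Lambda - x_n} = \chi(S_{\Lambda - x_n})$ and $P_\Gamma = \chi(S_\Gamma)$ for a continuous $\chi$ vanishing near $0$ and equal to $1$ on $[A,B]$; the uniform spectral gap $\sigma(S_{\Lambda - x_n}), \sigma(S_\Gamma) \subseteq \{0\} \cup [A,B]$ is exactly what makes $\chi$ continuous on a common compact set containing all spectra, and SOT convergence of uniformly bounded self-adjoint sequences passes through polynomials and hence through $\chi$. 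This is a clean, correct alternative; what it buys is that the convergence of the projections is reduced entirely to part (i) plus soft operator theory, at the price of having to prove the stability of the Riesz bounds up front (which the paper defers to \Cref{cor:stability_weaklimits}) and of invoking the continuous functional calculus, whereas the paper's decomposition argument stays elementary and self-contained.
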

\begin{proof}
Throughout the proof, we simply write $\Lambda_n = \Lambda - x_n$ for $n \in \mathbb{N}$.
\\~\\
(i) Let $0<B <\infty$ be the Bessel bound for $E(\Lambda)$. For $\varepsilon > 0$ and $f \in \mathcal{D}$, a combination of \Cref{lem:amalgam} and \Cref{lem:densesubspace}  yields $r = r (\varepsilon) > 0$ such that
\begin{align} \label{eq:hap}
\sum_{\lambda \in \Lambda' \setminus B_r (0)} | \langle f, e_{\lambda} \rangle |^2 = \sum_{\lambda \in \Lambda' \setminus B_r (0)} | \widehat{f } (\lambda) |^2 < \frac{\varepsilon}{4}
\end{align}
for all relatively separated sets $\Lambda' \subseteq \mathbb{R}^d$ with $\Rel(\Lambda') \leq \Rel(\Lambda)$. Enlarging $r$ if necessary, we may assume that $\Gamma \cap \partial B_r(0) = \emptyset$. The continuity of the map $x \mapsto | \langle f, e_x \rangle |^2$ together with \Cref{lem:weak-convergence-separated} implies that for all sufficiently large $n \in \mathbb{N}$, we have
\[
\bigg| \sum_{\gamma \in \Gamma \cap B_r (0)} |\langle f, e_{\gamma}  \rangle |^2 \; - \sum_{\mu \in \Lambda_n \cap B_r (0)} |\langle f, e_{\mu}  \rangle |^2 \bigg| < \frac{\varepsilon}{2}.
\]
In combination with \eqref{eq:hap}, this shows that
\begin{align} \label{eq:eps_diff}
\bigg| \sum_{\gamma \in \Gamma} |\langle f, e_{\gamma}  \rangle |^2 - \sum_{\mu \in \Lambda_n} |\langle f, e_{\mu}  \rangle |^2 \bigg| \leq \varepsilon
\end{align}
for all sufficiently large $n \in \mathbb{N}$. Therefore, we obtain that $\langle S_{\Lambda_n} f, f \rangle \to \langle S_\Gamma f, f \rangle$ as $n \to \infty$. Using polarization, the density of $\mathcal{D}$ in $L^2(S)$, and the uniform boundedness $ \| S_{\Lambda_n} \| \leq B$, we deduce that $S_{\Lambda_n}$ converges strongly to $S_{\Gamma}$.
\\~\\
(ii) Let $0 < A \leq B < \infty$ be frame bounds of $E(\Lambda)$. Clearly, each system $E(\Lambda_n)$ is also a frame for $L^2 (S)$ with the same bounds, so that $\| S_{\Lambda_n}^{-1} \| \leq 1/A$ for all $n \in \mathbb{N}$. By assertion (i), we have that $S_{\Lambda_n} \to S_{\Gamma}$ in the strong operator topology. Moreover, the identity \eqref{eq:eps_diff} yields that $E(\Gamma)$ is also a frame for $L^2 (S)$, so that $S_{\Gamma}^{-1}$ is well-defined and bounded on $L^2 (S)$.
Combining these observations with the identity
\[
S_{\Lambda_n}^{-1} - S_{\Gamma}^{-1} = S_{\Lambda_n}^{-1} (S_{\Gamma} - S_{\Lambda_n} ) S_{\Gamma}^{-1} ,
\]
it follows therefore for $f \in L^2 (S)$ that
\begin{align*}
\lim_{n \to \infty} \| (S_{\Lambda_n}^{-1} - S_{\Gamma}^{-1}) f \| 
&= \lim_{n \to \infty} \| S_{\Lambda_n}^{-1} (S_{\Gamma} - S_{\Lambda_n} ) S_{\Gamma}^{-1} f \| \\
&\leq A^{-1} \lim_{n \to \infty} \| (S_{\Gamma} - S_{\Lambda_n} ) S_{\Gamma}^{-1} f \| \\
&= 0,
\end{align*}
which finishes the proof of assertion (ii). 
\\~\\
(iii) Let $0 < A \leq B < \infty$ be the Riesz bounds of $E(\Lambda)$. We prove the assertion separately for $f \in \Span E(\Gamma)$ and $f \in (\Span E(\Gamma))^{\perp}$.

First, let $f \in \Span E(\Gamma)$. We may assume that $f = \sum_{\gamma \in \Gamma} c_{\gamma} e_\gamma$ for a finitely supported sequence $c$, say, $\mathrm{supp}(c) \subseteq B_r$. \Cref{lem:weak-convergence-separated}  implies the existence of $n_0 \in \N$ such that $\Lambda_n \cap B_r(0) = \{ \mu_n^{(1)}, \ldots , \mu_n^{(k)} \}$ and $\Gamma \cap B_r(0) = \{ \gamma^{(1)}, \ldots, \gamma^{(k)} \}$ for $n \geq n_0$, with $\mu_n^{(j)} \to \gamma^{(j)}$ as $n \to \infty$ for $1 \leq j \leq k$. For each $n \in \N$, define a $\Lambda_n$-indexed sequence $c^{(n)} \in \ell^2 (\Lambda_n)$ via $c^{(n)}_{\mu_n^{(j)}} = c_{\gamma^{(j)}}$ for $n \geq n_0$ and $c^{(n)}_{\mu} = 0$ otherwise. Set
\[ f_n = \sum_{\mu \in \Lambda_n} c^{(n)}_{\mu} e_\mu . \]
Then
\[ \| P_{\Lambda_n} f - f \| \leq \| f_n - f \| \leq \sum_{j=1}^k | c_{\gamma^{(j)}} | \| e_{\mu_n^{(j)}} - e_{\gamma^{(j)}} \| . \]
Thus, it follows from strong continuity of modulation that $P_{\Lambda_n} f \to f$ as $n \to \infty$.

If $f \in (\Span (E(\Gamma))^{\perp}$, then $P_{\Gamma} f = 0$
and thus it suffices to show that $P_{\Lambda_n} f \to 0$.
Using that each $E(\Lambda_n)$ is a Riesz sequence in $L^2 (S)$ with bounds $0<A\leq B < \infty$, and hence a frame for $\Span E(\Lambda_n)$ with uniform lower frame bound $A>0$, 
it follows that
\begin{align*}
    \| P_{\Lambda_n} f \|^2 \leq \frac{1}{A} \sum_{\mu \in \Lambda_n} |\langle f, e_{\mu} \rangle|^2 .
\end{align*}
Assertion (i), together with the assumption $f \in (\Span (E(\Gamma))^{\perp}$, implies that
\[
\lim_{n \to \infty} \sum_{\mu \in \Lambda_n} |\langle f, e_{\mu} \rangle|^2  = \sum_{\gamma \in \Gamma} |\langle f, e_{\gamma} \rangle |^2 = 0,
\]
and thus $\| P_{\Lambda_n}f \| \to 0$ as $n \to \infty$. This completes the proof.
\end{proof}

\begin{corollary} \label{cor:stability_weaklimits}
Let $S \subseteq \mathbb{R}^d$ be a set of finite measure and $\Lambda \subseteq \mathbb{R}^d$ be separated. Then the following assertions hold:
\begin{enumerate}[(i)]
    \item If $E(\Lambda)$ is a frame for $L^2(S)$ and $\Gamma \in W(\Lambda)$, then $E(\Gamma)$ is a frame for $L^2(S)$ with the same frame bounds.
    \item If $E(\Lambda)$ is a Riesz sequence in $L^2(S)$ and $\Gamma \in W(\Lambda)$, then $E(\Gamma)$ is a Riesz sequence in $L^2(S)$ with the same Riesz bounds.
\end{enumerate}
\end{corollary}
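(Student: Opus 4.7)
The plan is to derive the corollary directly from \Cref{prop:sot}. Fix $\Gamma \in W(\Lambda)$ and choose a sequence $\{x_n\}_{n \in \N}$ in $\R^d$ so that $\Lambda_n := \Lambda - x_n$ converges weakly to $\Gamma$. As a preliminary observation, multiplication by $e_{x_n}$ is unitary on $L^2(S)$, and the identity $e_{\lambda - x_n} = e_{-x_n}\, e_\lambda$ together with a relabeling of indices shows that $E(\Lambda_n)$ is a frame (respectively, a Riesz sequence) with exactly the same bounds $0 < A \leq B < \infty$ as $E(\Lambda)$.

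For part (i), the argument is then immediate: for every $f \in L^2(S)$ and every $n \in \N$ one has $A\|f\|^2 \leq \langle S_{\Lambda_n} f, f\rangle \leq B\|f\|^2$, while \Cref{prop:sot}(i) supplies $\langle S_{\Lambda_n} f, f\rangle \to \langle S_\Gamma f, f\rangle$. Passing to the limit yields the same inequalities for $S_\Gamma$, which is the frame property for $E(\Gamma)$ with bounds $A$ and $B$.

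For part (ii), I would first establish the Riesz inequalities on finitely supported sequences in $\ell^2(\Gamma)$. Given such a $c$ with support in $\Gamma \cap B_r(0)$ for some $r > 0$ with $\Gamma \cap \partial B_r(0) = \emptyset$, \Cref{lem:weak-convergence-separated} supplies enumerations $\Gamma \cap B_r(0) = \{\gamma^{(j)}\}_{j=1}^k$ and $\Lambda_n \cap B_r(0) = \{\mu_n^{(j)}\}_{j=1}^k$ for all large $n$, with $\mu_n^{(j)} \to \gamma^{(j)}$. Transplanting $c$ to $c^{(n)} \in \ell^2(\Lambda_n)$ via $c^{(n)}_{\mu_n^{(j)}} := c_{\gamma^{(j)}}$ and $c^{(n)}_\mu := 0$ otherwise preserves the $\ell^2$-norm, and strong continuity of modulation gives $f_n := \sum_\mu c^{(n)}_\mu e_\mu \to f := \sum_\gamma c_\gamma e_\gamma$ in $L^2(S)$. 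Feeding $c^{(n)}$ into the Riesz inequality for $E(\Lambda_n)$ and letting $n \to \infty$ then delivers $A\|c\|^2 \leq \|f\|^2 \leq B\|c\|^2$, and these bounds extend to all of $\ell^2(\Gamma)$ by density. No serious obstacle is anticipated; the only mildly delicate point is the initial unitary reindexing, which is routine but essential for the bounds to propagate unchanged to the limit.
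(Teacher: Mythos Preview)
Your proof is correct and, for part (i), identical to the paper's. For part (ii) you use the same core idea---transplant the coefficients via \Cref{lem:weak-convergence-separated} and pass to the limit---but you go directly from $f_n \to f$ (strong continuity of modulation) to the Riesz bounds. The paper instead inserts an intermediate step, invoking \Cref{prop:sot}(iii) to write $\|f\|^2 = \lim_n \|P_{\Lambda_n} f\|^2 = \lim_n \|f_n\|^2$ before applying the bounds. Your route is slightly more economical: it shows that part (ii) of the corollary does not actually need \Cref{prop:sot} at all, only \Cref{lem:weak-convergence-separated} and the strong continuity of $x \mapsto e_x$.
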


\begin{proof} Let $\{x_n\}_{n \in \mathbb{N}}$ be sequence in $\mathbb{R}^d$ such that $\Lambda - x_n \to \Gamma$. Set $\Lambda_n = \Lambda - x_n$. 
\\~\\
(i) If $E(\Lambda)$ is a frame for $L^2 (S)$ with bounds $0<A \leq B < \infty$, then so is any $E(\Lambda_n)$ with $n \in \mathbb{N}$, hence $AI \leq S_{\Lambda_n} \leq BI$ for each $n \in \N$. Since $S_{\Lambda_n}$ converges to $S_{\Gamma}$ in the strong operator topology by \Cref{prop:sot} (i), it follows that $AI \leq S_\Gamma \leq BI$, which proves the claim.
\\~\\
(ii) Let $c = \{ c_{\gamma} \}_{\gamma \in \Gamma}$ be a finitely supported sequence, say $\mathrm{supp}(c) = \{ \gamma^{(1)}, \ldots, \gamma^{(k)} \}$, and choose elements $\mu_n^{(j)} \in \Lambda_n$ such that $\mu_n^{(j)} \to \gamma^{(j)}$ as $n \to \infty$. Then $e_{\mu_n^{(j)}} \to e_{\gamma^{(j)}}$ as $n \to \infty$ by strong continuity of modulation. Since also $P_{\Lambda_n} e_{\gamma^{(j)}} \to e_{\gamma^{(j)}}$ by \Cref{prop:sot} (iii), we infer that
\[ \Big\| \sum_{\gamma \in \Gamma}c_{\gamma} e_\gamma \Big\|^2 = \lim_{n \to \infty} \Big\| P_{\Lambda_n} \sum_{j=1}^k c_{\gamma^{(j)}} e_{\gamma^{(j)}} \Big\|^2 = \lim_{n \to \infty} \Big\|  \sum_{j=1}^k c_{\gamma^{(j)}} e_{\mu_n^{(j)}} \Big\|^2 . \]
We may now apply the Riesz bounds of $E(\Lambda_n)$ to the right hand side to reach the desired conclusion.
\end{proof}

\section{Frames at the critical density} \label{sec:frame}

In this section, we will prove part (i) of \Cref{thm:main_intro}. For this, we will use the notion of the \emph{upper frame measure} of a frame $E(\Lambda)$ for $L^2 (S)$ defined by 
\[ M^+(\Lambda) := \limsup_{r \to \infty} \sup_{x \in \mathbb{R}^d} \frac{1}{\# (\Lambda \cap B_r (x))} \sum_{\lambda \in \Lambda \cap B_r (x)} \langle e_{\lambda}, S_{\Lambda}^{-1} e_{\lambda} \rangle , \]
where $S_\Lambda$ denotes the frame operator of $E(\Lambda)$.

The following theorem relates the upper frame measure and lower Beurling density. It  can be found implicitly for bounded spectra in (the proof of) \cite[Theorem 4]{nitzan2012revisiting} (see also 
\cite{kolountzakis1996structure, iosevich2006weyl}). The general case is proven in \cite{bownik2025frame}; cf. \cite[Theorem 6.1]{bownik2025frame}.

\begin{theorem}[\cite{nitzan2012revisiting, bownik2025frame}] \label{thm:framemeasure}
    Let $S \subseteq \mathbb{R}^d$ be a set of finite measure and let $\Lambda \subseteq \mathbb{R}^d$. If $E(\Lambda)$ is a frame for $L^2 (S)$, then
    \[
    M^+(\Lambda) = \frac{|S|}{D^- (\Lambda)}. 
    \]
\end{theorem}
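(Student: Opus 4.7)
The approach, following \cite{nitzan2012revisiting} in the bounded case and \cite{bownik2025frame} in general, is to compare a localized sum of frame coefficients with a trace-like quantity. Introduce the canonical Parseval frame $g_\lambda := S_\Lambda^{-1/2} e_\lambda$; since $\{g_\lambda\}_{\lambda \in \Lambda}$ is Parseval and $a_\lambda := \langle e_\lambda, S_\Lambda^{-1} e_\lambda \rangle = \| g_\lambda \|^2$, applying the Parseval identity to the vector $e_\xi \in L^2(S)$ and noting that $\langle g_\lambda, e_\xi \rangle = \widehat{g_\lambda}(\xi)$ gives, for every $\xi \in \R^d$, the pointwise identity $\sum_{\lambda \in \Lambda} |\widehat{g_\lambda}(\xi)|^2 = \| e_\xi \|^2 = |S|$. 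Integrating this over $B_r(x)$ and swapping sum with integral (justified by non-negativity) yields the central identity
\[ \sum_{\lambda \in \Lambda} \int_{B_r(x)} |\widehat{g_\lambda}(\xi)|^2 \, d\xi = |S| \cdot |B_r|. \]

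Comparing this with $\sum_{\lambda \in \Lambda \cap B_r(x)} a_\lambda = \sum_{\lambda \in \Lambda \cap B_r(x)} \int_{\R^d} |\widehat{g_\lambda}|^2 \, d\xi$, subtraction gives the defect representation
\[ \sum_{\lambda \in \Lambda \cap B_r(x)} a_\lambda - |S| \cdot |B_r| \; = \sum_{\lambda \in \Lambda \cap B_r(x)} \int_{\R^d \setminus B_r(x)} |\widehat{g_\lambda}|^2 \, d\xi \; - \sum_{\lambda \in \Lambda \setminus B_r(x)} \int_{B_r(x)} |\widehat{g_\lambda}|^2 \, d\xi, \]
so that the defect reduces to a difference of two ``boundary" sums measuring the extent to which inside frame vectors leak out in frequency, and vice versa.

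The core of the argument is to establish that each of these boundary sums is $o(|B_r|)$ as $r \to \infty$, uniformly in $x$. The intuition is that, by the Parseval identity and the structure of $L^2(S)$, the frame vectors $\widehat{g_\lambda}$ are effectively frequency-localized on a scale independent of $r$, so only indices $\lambda$ within a bounded tubular neighborhood of $\partial B_r(x)$ contribute; relative separation of $\Lambda$ then bounds the number of such indices by $O(r^{d-1}) = o(|B_r|)$. For bounded $S$, this is implemented via decay estimates on $\widehat{\mathds{1}_S}$ together with amalgam-type bounds in the spirit of \Cref{lem:amalgam}. For general $S$ of finite measure, one approximates $S$ by bounded subsets and controls the remainder uniformly using that $\mathcal{F}(L^2(S))$ admits a dense subspace with appropriate decay (cf.\ \Cref{lem:densesubspace}); this is the content of \cite[Theorem 6.1]{bownik2025frame}.

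Once the boundary estimate is in place, dividing by $\#(\Lambda \cap B_r(x))$ yields
\[ \frac{1}{\#(\Lambda \cap B_r(x))} \sum_{\lambda \in \Lambda \cap B_r(x)} a_\lambda = \frac{|S| \cdot |B_r| + o(|B_r|)}{\#(\Lambda \cap B_r(x))}, \]
with error uniform in $x$. Taking $\sup_x$ converts $\#(\Lambda \cap B_r(x))$ into its infimum, after which $\limsup_{r \to \infty}$ identifies the right-hand side with $|S|/D^-(\Lambda)$ by the very definition of the lower Beurling density. The principal obstacle is the uniform $o(|B_r|)$ boundary estimate for unbounded $S$, where elementary Paley--Wiener decay of $\widehat{\mathds{1}_S}$ is unavailable and must be replaced by a more delicate decomposition and approximation argument.
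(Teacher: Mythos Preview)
The paper does not actually prove \Cref{thm:framemeasure}; it is stated as a known result, attributed to \cite{nitzan2012revisiting} for bounded spectra and to \cite[Theorem~6.1]{bownik2025frame} for general sets of finite measure. There is therefore no proof in the paper to compare your proposal against.

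That said, your sketch is a faithful outline of the argument in the cited references. The key identity $\sum_{\lambda} |\widehat{g_\lambda}(\xi)|^2 = |S|$ for the canonical Parseval frame, the resulting defect representation, and the reduction to a uniform $o(|B_r|)$ boundary estimate are exactly the ingredients used in \cite{nitzan2012revisiting}. You correctly identify that the passage from bounded to unbounded $S$ is the nontrivial step requiring the approximation machinery of \cite{bownik2025frame}, and you are appropriately candid that this step is not carried out in your sketch but deferred to that reference. One minor point: in the final step, taking $\sup_x$ of the left-hand side and then $\limsup_{r\to\infty}$ does indeed yield $|S|/D^-(\Lambda)$, but the justification that the $o(|B_r|)$ term survives uniformly through the supremum and the division by $\#(\Lambda\cap B_r(x))$ (which is itself of order $|B_r|$ from above and below, since $0<D^-(\Lambda)\le D^+(\Lambda)<\infty$ for a frame) deserves a sentence; this is routine but should be made explicit in a full proof.
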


Using \Cref{thm:framemeasure}, we will prove the following theorem.

\begin{theorem} \label{thm:frame-to-riesz-basis}
    Let $S \subseteq \mathbb{R}^d$ be a set of finite measure and $\Lambda \subseteq \mathbb{R}^d$ be separated.  Then the following are equivalent:
    \begin{enumerate}[(i)]
        \item There exists $\Gamma \in W(\Lambda)$ such that $E(\Gamma)$ is a Riesz basis for $L^2(S)$.
        \item $D^-(\Lambda) = |S|$.
        \item For every $\varepsilon > 0$ and every $r>0$ there exists $x \in \mathbb{R}^d$ such that
        \[
        \langle e_{\lambda}, S_{\Lambda}^{-1} e_{\lambda} \rangle > 1 - \varepsilon  \quad \text{for all} \quad \lambda \in \Lambda \cap B_r (x). \]
    \end{enumerate}
\end{theorem}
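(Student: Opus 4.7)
The plan is to prove the cycle (i) $\Rightarrow$ (ii) $\Rightarrow$ (iii) $\Rightarrow$ (i). The implication (i) $\Rightarrow$ (ii) is essentially bookkeeping: if $\Gamma \in W(\Lambda)$ is such that $E(\Gamma)$ is a Riesz basis, then Landau's necessary density conditions (recalled in the Introduction) give $D^-(\Gamma) = D^+(\Gamma) = |S|$. Since $\Lambda$ is separated and $\Gamma \in W(\Lambda)$, the preliminaries yield $D^-(\Lambda) \leq D^-(\Gamma) = |S|$, and combined with $D^-(\Lambda) \geq |S|$ from the frame property (the standing hypothesis of the section), this forces $D^-(\Lambda) = |S|$.

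For (ii) $\Rightarrow$ (iii) I would combine \Cref{thm:framemeasure} with a volumetric packing argument. The hypothesis $D^-(\Lambda) = |S|$ translates to $M^+(\Lambda) = 1$, so for any $\delta > 0$ one can find $R > 0$ arbitrarily large together with $z \in \R^d$ such that the average of $\langle e_\lambda, S_\Lambda^{-1} e_\lambda\rangle$ over $\lambda \in \Lambda \cap B_R(z)$ exceeds $1 - \delta$. Since $\{S_\Lambda^{-1/2} e_\lambda\}_{\lambda \in \Lambda}$ is a Parseval frame we have $\langle e_\lambda, S_\Lambda^{-1} e_\lambda\rangle = \|S_\Lambda^{-1/2} e_\lambda\|^2 \leq 1$, and a one-line counting inequality bounds the number of \emph{bad} points (those with value $\leq 1 - \varepsilon$) inside $B_R(z)$ by $\delta N/\varepsilon$, where $N = \#(\Lambda \cap B_R(z))$. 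The union of $r$-balls around the bad points has measure $\lesssim (\delta N/\varepsilon)|B_r|$; using the separation of $\Lambda$ to bound $N \leq C R^d$, for $\delta$ small enough (depending on $\varepsilon$, $r$, and $\Lambda$) this obstructed volume is strictly less than $|B_{R - r}(z)|$. One can therefore select $x \in B_{R-r}(z)$ whose $r$-ball avoids every bad point, which yields (iii).

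For (iii) $\Rightarrow$ (i), apply (iii) with $\varepsilon_n = 1/n$ and $r_n = n$ to obtain $x_n \in \R^d$ such that $\langle e_\lambda, S_\Lambda^{-1} e_\lambda\rangle > 1 - 1/n$ for every $\lambda \in \Lambda \cap B_n(x_n)$. The covariance identity \eqref{eq:covariance_inverse} rewrites this as $\langle e_\mu, S_{\Lambda - x_n}^{-1} e_\mu\rangle > 1 - 1/n$ for every $\mu \in (\Lambda - x_n) \cap B_n$. Using compactness of the hull of a separated set in the weak topology (a standard diagonal extraction), I pass to a subsequence with $\Lambda - x_n \to \Gamma \in W(\Lambda)$. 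By \Cref{cor:stability_weaklimits} the system $E(\Gamma)$ is a frame, and by \Cref{prop:sot}(ii) we have $S_{\Lambda - x_n}^{-1} \to S_\Gamma^{-1}$ in the strong operator topology with uniformly bounded operator norms. For each $\gamma \in \Gamma$, \Cref{lem:weak-convergence-separated} provides $\mu_n \in \Lambda - x_n$ with $\mu_n \to \gamma$; since $\mu_n$ lies in $B_n$ for all large $n$,
\[
1 - \tfrac{1}{n} \,<\, \langle e_{\mu_n}, S_{\Lambda - x_n}^{-1} e_{\mu_n}\rangle \;\xrightarrow{n \to \infty}\; \langle e_\gamma, S_\Gamma^{-1} e_\gamma \rangle,
\]
where the limit combines norm-continuity of modulation with the uniformly bounded SOT convergence of the inverses. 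Hence $\|S_\Gamma^{-1/2} e_\gamma\|^2 = \langle e_\gamma, S_\Gamma^{-1} e_\gamma \rangle = 1$ for every $\gamma \in \Gamma$, so the Parseval frame $\{S_\Gamma^{-1/2} e_\gamma\}_{\gamma \in \Gamma}$ consists of unit vectors and is therefore an orthonormal basis. Consequently $E(\Gamma)$ is a Riesz basis for $L^2(S)$.

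The main technical obstacle is the packing step in (ii) $\Rightarrow$ (iii): the quantitative balance between $\delta$ and the parameters $\varepsilon$, $r$, and the separation of $\Lambda$ must be arranged so that the bad points cannot cover the whole ball $B_{R-r}(z)$. Aside from this, the remaining work rests on weak-limit compactness, the stability and SOT-convergence results of \Cref{sec:stability}, and the standard principle that a Parseval frame consisting of unit vectors is an orthonormal basis.
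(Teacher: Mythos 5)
Your proposal is correct and follows essentially the same route as the paper: the cycle (i)$\Rightarrow$(ii)$\Rightarrow$(iii)$\Rightarrow$(i), with \Cref{thm:framemeasure} as the key input for (ii)$\Rightarrow$(iii) and the ``Parseval frame of unit vectors is an orthonormal basis'' principle closing the loop in (iii)$\Rightarrow$(i). The only divergence is in how (ii)$\Rightarrow$(iii) is finished: the paper shows that the bad set $\Lambda_\varepsilon = \{\lambda : \langle e_\lambda, S_\Lambda^{-1}e_\lambda\rangle \leq 1-\varepsilon\}$ has $D^-(\Lambda_\varepsilon)=0$ and hence is not relatively dense, whereas you run a direct volumetric packing argument inside a single large ball $B_R(z)$ where the frame-measure average is close to $1$; both are sound, and your version is marginally more hands-on but requires the quantitative bookkeeping (choosing $\delta$ small and then $R$ large) that you correctly flag.
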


\begin{proof}
Recall that if $\Lambda \subseteq \mathbb{R}^d$ is such that $E(\Lambda)$ is a frame for $L^2 (S)$, then $D^- (\Lambda) \geq |S|$.
\\~\\
(i) $\Rightarrow$ (ii).  If there exists $\Gamma \in W(\Lambda)$  such that $E(\Gamma)$ is a Riesz basis for $L^2 (S)$, then necessarily $D^-(\Gamma) = |S|$.  Since $D^-(\Gamma) \geq D^-(\Lambda)$, we get that $|S| \geq D^-(\Lambda)$, and thus $D^-(\Lambda) = |S|$, which shows (ii).
\\~\\
(ii) $\Rightarrow$ (iii). 
Suppose that  $D^-(\Lambda) = |S|$. Given $\varepsilon > 0 $, we consider the set
    \[ \Lambda_{\varepsilon} := \big\{ \lambda \in \Lambda : \langle e_{\lambda} , S^{-1}_{\Lambda} e_{\lambda} \rangle \leq 1 - \varepsilon \big\}. \]
    For showing (iii), we need to show that $\Lambda_{\varepsilon}$ is not relatively dense. Indeed, this means precisely that for every $r > 0$ there exists $x \in \R^d$ such that $\Lambda_\varepsilon \cap B_r(x) = \emptyset$, that is,\ $\langle e_\lambda, S_\Lambda^{-1} e_\lambda \rangle > 1 - \varepsilon$ for all $\lambda \in \Lambda \cap B_r(x)$.
    
    Since $D^-(\Lambda) = |S|$ by assumption, an application of \Cref{thm:framemeasure} implies that $M^+ (\Lambda) =1$,
    which is equivalent to
    \begin{align} \label{eq:framemeasure}
      \liminf_{r \to \infty} \inf_{x \in \mathbb{R}^d} \frac{1}{\# (\Lambda \cap B_r (x))} \sum_{\lambda \in \Lambda \cap B_r (x)} (1-\langle e_{\lambda}, S_{\Lambda}^{-1} e_{\lambda} \rangle) =  0.
    \end{align}
    For $r > 0$ and $x \in \mathbb{R}^d$, a direct calculation gives
    \begin{align*}
    \frac{1}{\#(\Lambda \cap B_r (x))} 
    \sum_{\lambda \in \Lambda \cap B_r(x)} (1 - \langle e_{\lambda} , S_{\Lambda}^{-1} e_{\lambda} \rangle ) 
    &\geq \frac{1}{\#(\Lambda \cap B_r (x))} 
    \sum_{\lambda \in \Lambda_{\varepsilon} \cap B_r(x)} ( 1 - \langle e_{\lambda} , S_{\Lambda}^{-1} e_{\lambda} \rangle ) \\
    &\geq \varepsilon \frac{\#(\Lambda_{\varepsilon} \cap B_r (x))}{\# (\Lambda \cap B_r (x))}.
    \end{align*}
    Using \eqref{eq:framemeasure}, we thus obtain
  \begin{align*}
    0 = \liminf_{r \to \infty} \inf_{x \in \mathbb{R}^d} \frac{\#(\Lambda_{\varepsilon} \cap B_r (x))}{\# (\Lambda \cap B_r (x))} &=  \liminf_{r \to \infty} \inf_{x \in \mathbb{R}^d} \frac{\#(\Lambda_{\varepsilon} \cap B_r (x))}{|B_r (x)|} \frac{|B_r (x)|}{\# (\Lambda \cap B_r (x))} \\
    &\geq \frac{D^- (\Lambda_{\varepsilon})}{D^+ (\Lambda)}.
    \end{align*}
    Since $0 < D^+ (\Lambda) < \infty$ as $E(\Lambda)$ is a frame for $L^2 (S)$, it follows that $D^-(\Lambda_{\varepsilon}) = 0$. From this we conclude that $\Lambda_\varepsilon$ is not relatively dense.
\\~\\
(iii) $\Rightarrow$ (i).
    Assume that (iii) holds. Then, for each $n \in \N$, we can choose $x_n \in \mathbb{R}^d$ such that
    \[
    1 - \frac{1}{n} < \langle e_{\lambda}, S_{\Lambda}^{-1} e_{\lambda} \rangle \quad \text{for all} \quad \lambda \in \Lambda \cap B_n (x_n).
    \]
    By passing to a subsequence if necessary, we may assume that $\Lambda - x_n$ converges to a set $\Gamma \in W(\Lambda)$ as $n \to \infty$. Let $\gamma \in \Gamma$. Then there exist $\lambda_n  \in \Lambda$ such that $\lambda_n - x_n \to \gamma$ as $n \to \infty$. Thus, given $\delta > 0$, there exists $N_1 \in \mathbb{N}$ such that $\lambda_n - x_n \in B_{\delta} (\gamma)$ for all $n \geq N_1$. Choose $N_2 \in \mathbb{N}$ such that $B_{\delta} (\gamma) \subseteq B_{N_2}(0)$. Then, for $n \geq \max\{ N_1, N_2\}$, we have that $\lambda_n - x_n \in B_n (0)$, 
    and hence 
    $
     \lambda_n  \in \Lambda \cap B_n (x_n), 
    $
    which implies that 
    \[
    1 - \frac{1}{n} < \langle e_{\lambda_n }, S^{-1}_{\Lambda} e_{\lambda_n } \rangle \quad \text{for all}  \quad n \geq \max\{ N_1, N_2 \}.
    \]
    Since 
    $
     S_{\Lambda - x_n}^{-1} \to S_{\Gamma}^{-1}
    $ as $n \to \infty$ in the strong operator topology by \Cref{prop:sot} (ii) and  $e_{\lambda_n - x_n} \to e_{\gamma}$ as $n \to \infty$ by strong continuity of modulation, it follows from the above and the identity \eqref{eq:covariance_inverse} that
    \[
    \langle e_{\gamma} , S_{\Gamma}^{-1} e_{\gamma} \rangle = \lim_{n \to \infty} \langle e_{\lambda_n - x_n}, S_{\Lambda - x_n}^{-1} e_{\lambda_n - x_n} \rangle = \lim_{n \to \infty} \langle e_{\lambda_n}, S^{-1}_{\Lambda} e_{\lambda_n} \rangle \geq 1.
    \]
    Since $\gamma \in \Gamma$ was arbitrary, we deduce that $ \langle e_{\gamma} , S_{\Gamma}^{-1} e_{\gamma} \rangle = 1$ for all $\gamma \in \Gamma$. 
    Hence, the canonical Parseval frame $S_{\Gamma}^{-1/2} E(\Gamma)$ of $E(\Gamma)$ has unit norm, and thus must be an orthonormal basis. In turn, this implies
    that $E(\Gamma)$ is a Riesz basis for $L^2(S)$, which completes the proof.
\end{proof}

\section{Riesz sequences at the critical density} \label{sec:riesz}

In this section we prove part (ii) of \Cref{thm:main_intro}. Recall that for a set $\Lambda \subseteq \R^d$, we denote by $P_\Lambda$ the orthogonal projection onto $\Span E(\Lambda)$.

We shall use the following theorem, which can be understood as an analogue of  \Cref{thm:framemeasure} for Riesz sequences.
Its proof is modeled on the necessary density condition for uniformly minimal systems given in \cite[Theorem 2]{nitzan2012revisiting}. 
Recall that a system $E(\Lambda)$ is said to be \emph{uniformly minimal} if it has a biorthogonal system whose norms are uniformly bounded. 

\begin{theorem}\label{prop:riesz-sequence-density}
Let $S \subseteq \mathbb{R}^d$ be a set of finite measure. 
Suppose that $E(\Lambda)$ is uniformly minimal in $L^2(S)$. Then
\begin{align*}
    D^+(\Lambda) &= \limsup_{r \to \infty} \sup_{x \in \R^d} \frac{1}{|B_r(x)|} \int_{B_r(x)} \| P_{\Lambda \cap B_r (x)} e_y \|^2 \dif{y}.
\end{align*}
\end{theorem}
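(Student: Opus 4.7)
The plan is to prove both inequalities in the proposed identity. Writing $\Lambda' := \Lambda \cap B_r(x)$ and $V := \Span E(\Lambda')$, I note that uniform minimality implies minimality of $E(\Lambda)$, so the finite family $\{e_\lambda\}_{\lambda \in \Lambda'}$ is linearly independent and $\dim V = \#\Lambda'$. For any orthonormal basis $\{\phi_i\}$ of $V$, one has $\|P_V e_y\|^2 = \sum_i |\widehat{\phi_i}(y)|^2$, so Plancherel's theorem on $\R^d$ yields the trace identity
\[
\int_{\R^d} \|P_V e_y\|^2 \, dy = \sum_i \|\widehat{\phi_i}\|^2_{L^2(\R^d)} = \sum_i \|\phi_i\|^2 = \#\Lambda'.
\]
Since $\int_{B_r(x)} \|P_V e_y\|^2 \, dy \leq \int_{\R^d} \|P_V e_y\|^2 \, dy = \#\Lambda'$, dividing by $|B_r|$ and taking $\limsup_r \sup_x$ immediately gives the upper bound $\leq D^+(\Lambda)$.

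For the matching lower bound, the plan is to show that the tail integral $\int_{\R^d \setminus B_r(x)} \|P_V e_y\|^2 \, dy$ is $o(|B_r|)$ as $r \to \infty$, uniformly in $x$. This is where uniform minimality enters, via a biorthogonal system $\{\psi_\lambda\}_{\lambda \in \Lambda}$ with $\|\psi_\lambda\| \leq M$ for some $M < \infty$: the projected vectors $\phi_\lambda := P_V \psi_\lambda$ form the biorthogonal system to $\{e_\lambda\}_{\lambda \in \Lambda'}$ inside $V$, with $\|\phi_\lambda\| \leq M$. The corresponding dual expansion reads $P_V e_y = \sum_{\lambda \in \Lambda'} \overline{\widehat{\phi_\lambda}(y)} \, e_\lambda$, whence
\[
\|P_V e_y\|^2 = \sum_{\lambda \in \Lambda'} \overline{\widehat{\phi_\lambda}(y)} \, \widehat{\mathds{1}_S}(y - \lambda).
\]
Integrating this identity over $\R^d \setminus B_r(x)$, applying the triangle inequality on the finite sum, and then Cauchy--Schwarz in $L^2(\R^d)$ together with $\|\widehat{\phi_\lambda}\|_{L^2(\R^d)} = \|\phi_\lambda\| \leq M$ leads to
\[
\int_{\R^d \setminus B_r(x)} \|P_V e_y\|^2 \, dy \leq M \sum_{\lambda \in \Lambda'} \biggl( \int_{\R^d \setminus B_r(x)} |\widehat{\mathds{1}_S}(y - \lambda)|^2 \, dy \biggr)^{1/2}.
\]

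To close the estimate, I would introduce an auxiliary radius $R > 0$ and split the sum according to whether $\lambda$ lies in the interior $B_{r-R}(x)$ or in the annulus $B_r(x) \setminus B_{r-R}(x)$. For interior $\lambda$, the change of variable $z = y - \lambda$ yields $\int_{\R^d \setminus B_r(x)} |\widehat{\mathds{1}_S}(y-\lambda)|^2 \, dy \leq \varepsilon_0(R) := \int_{|z|>R} |\widehat{\mathds{1}_S}(z)|^2 \, dz$, which tends to $0$ as $R \to \infty$ since $\widehat{\mathds{1}_S} \in L^2(\R^d)$; for boundary $\lambda$, the tail is crudely bounded by $\|\widehat{\mathds{1}_S}\|_{L^2}^2 = |S|$. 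Uniform minimality forces $\Lambda$ to be separated (from $\|e_\lambda - e_\mu\| \geq 1/M$ together with continuity of $\widehat{\mathds{1}_S}$ at the origin), so $\Lambda$ is in particular relatively separated, and the annulus therefore contains $O(Rr^{d-1})$ elements while $\Lambda'$ has at most $O(r^d)$. Dividing by $|B_r| \asymp r^d$ and letting first $r \to \infty$ and then $R \to \infty$ shows $\sup_x |B_r|^{-1} \int_{\R^d \setminus B_r(x)} \|P_V e_y\|^2 \, dy \to 0$, which combined with the trace identity yields $\limsup_r \sup_x |B_r|^{-1} \int_{B_r(x)} \|P_V e_y\|^2 \, dy = \limsup_r \sup_x \#\Lambda' / |B_r| = D^+(\Lambda)$. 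The main obstacle will be making this tail bound uniform in the center $x$; the dual-basis representation, with its globally uniform norm bound $\|\phi_\lambda\| \leq M$, is precisely the device that converts uniform minimality into the quantitative control needed.
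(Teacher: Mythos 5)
Your proposal is correct and follows essentially the same route as the paper's proof: both rest on the trace identity $\int_{\R^d}\|P_{\Lambda\cap B_r(x)}e_y\|^2\,dy=\#(\Lambda\cap B_r(x))$, the biorthogonal representation $\|P_{\Lambda\cap B_r(x)}e_y\|^2=\sum_{\lambda}\overline{\widehat{\phi_\lambda}(y)}\,\widehat{\mathds{1}_S}(y-\lambda)$ with the uniform bound on the dual norms, and Cauchy--Schwarz against the $L^2$-tail of $\widehat{\mathds{1}_S}$. The only differences are organizational (you obtain the upper bound separately from an orthonormal-basis trace computation, and you split the point set into interior and boundary points rather than splitting the integration domain into an annulus and an exterior region, as the paper does), plus the harmless slip that the boundary terms are bounded by $|S|^{1/2}$ rather than $|S|$; neither affects correctness.
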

\begin{proof}
Let $\varepsilon > 0$ and choose $r_0 = r_0(\varepsilon) > 0$ such that 
\begin{align} \label{eq:continuous_tail}
\int_{\mathbb{R}^d \setminus B_{r_0}} |\widehat{\mathds{1}_S} (y)|^2 \; dy \leq \varepsilon^2. 
\end{align} 
Let $\{ h_{\lambda} \}_{\lambda \in \Lambda}$ be a  biorthogonal system for $E(\Lambda)$ satisfying $C := \sup_{\lambda \in \Lambda} \| h_{\lambda} \| < \infty$. 
Fix $r > 0$. Then the systems $E(\Lambda \cap B_r (x))$ and $\{P_{\Lambda \cap B_r (x)} h_{\lambda} \}_{\lambda \in \Lambda \cap B_r (x)} $ are biorthogonal in $\Span E(\Lambda \cap B_r (x))$, because
\[
\langle e_{\lambda}, P_{\Lambda \cap B_r (x)} h_{\lambda'} \rangle = \langle e_{\lambda}, h_{\lambda'} \rangle = \delta_{\lambda, \lambda'}, \quad \lambda, \lambda' \in \Lambda \cap B_r (x).
\]
Using the biorthogonality of $E(\Lambda \cap B_r (x))$ and $\{P_{\Lambda \cap B_r (x)} h_{\lambda} \}_{\lambda \in \Lambda \cap B_r (x)} $, we have that
\begin{align*}
\# (\Lambda \cap B_r (x))  &= \sum_{\lambda \in \Lambda \cap B_r (x)} \langle \mathcal{F} e_{\lambda} , \mathcal{F} P_{\Lambda \cap B_r (x)} h_{\lambda} \rangle_{L^2 (\mathbb{R}^d)} \\
&= \sum_{\lambda \in \Lambda \cap B_r (x)} \int_{\mathbb{R}^d}  (\mathcal{F} e_{\lambda}) (y) \overline{ (\mathcal{F} P_{\Lambda \cap B_r (x)} h_{\lambda})(y)}  \; dy.
\end{align*}
Moreover, the biorthogonality of $E(\Lambda \cap B_r (x))$ and $\{P_{\Lambda \cap B_r (x)} h_{\lambda} \}_{\lambda \in \Lambda \cap B_r (x)} $ also yields
\begin{align*}
\sum_{\lambda \in \Lambda \cap B_r (x)} (\mathcal{F} e_{\lambda}) (y) \overline{ (\mathcal{F} P_{\Lambda \cap B_r (x)} h_{\lambda})(y)}
&= \sum_{\lambda \in \Lambda \cap B_r (x)}   \langle e_y, P_{\Lambda \cap B_r (x)} h_{\lambda} \rangle \langle e_{\lambda}, e_{y} \rangle \\
&= \| P_{\Lambda \cap B_r (x)} e_y \|^2 
\end{align*}
for any $y \in \mathbb{R}^d$. Therefore, we have that
\begin{align*}
\# (\Lambda \cap B_r (x)) &= \int_{B_r (x)} \| P_{\Lambda \cap B_r (x)} e_y \|^2 \; dy + \int_{B_{r+r_0}(x) \setminus B_r (x)} \| P_{\Lambda \cap B_r (x)} e_y \|^2 \; dy \\
&\quad \quad +  \sum_{\lambda \in \Lambda \cap B_r (x)} \int_{\mathbb{R}^d \setminus B_{r+r_0} (x)}  (\mathcal{F} e_{\lambda}) (y) \overline{ (\mathcal{F} P_{\Lambda \cap B_r (x)} h_{\lambda})(y)} dy ,
\end{align*}
which yields that
\begin{align*}
&\bigg| \# (\Lambda \cap B_r (x)) - \int_{B_r(x)} \| P_{\Lambda \cap B_r (x)} e_y \|^2 \; dy \bigg| \numberthis \label{eq:estimate_difference} \\
&\quad \quad \leq \int_{B_{r+r_0}(x) \setminus B_r (x)} \| P_{\Lambda \cap B_r (x)} e_y \|^2 \; dy\\
&\quad \quad \quad \quad + \sum_{\lambda \in \Lambda \cap B_r (x)} \bigg| \int_{\mathbb{R}^d \setminus B_{r+r_0} (x)}  (\mathcal{F} e_{\lambda}) (y) \overline{ (\mathcal{F} P_{\Lambda \cap B_r (x)} h_{\lambda})(y)} dy \bigg|
\end{align*}
For further estimating \eqref{eq:estimate_difference}, first simply note that 
\begin{align} \label{eq:estimate_firstterm}
\int_{B_{r+r_0}(x) \setminus B_r (x)} \| P_{\Lambda \cap B_r (x)} e_y \|^2 \; dy \leq |B_{r+r_0}(x) \setminus B_r(x)| |S|. 
\end{align}
Second, note that for fixed $\lambda \in \Lambda \cap B_r (x)$, we have that $B_{r_0}(\lambda) \subseteq B_{r+r_0}(x)$. Hence, it follows from \eqref{eq:continuous_tail} that
\begin{align*}
&\bigg| \int_{\mathbb{R}^d \setminus B_{r+r_0} (x)}  (\mathcal{F} e_{\lambda}) (y) \overline{( \mathcal{F} P_{\Lambda \cap B_r (x)} h_{\lambda})(y)} dy \bigg| \\
&\quad \quad \leq \bigg( \int_{\mathbb{R}^d \setminus B_{r+r_0} (x)} | (\mathcal{F} e_{\lambda}) (y) |^2 \; dy \bigg)^{1/2} \| \mathcal{F} P_{\Lambda \cap B_r (x)} h_{\lambda} \|_{L^2 (\mathbb{R}^d)}  \\
&\quad \quad \leq \bigg( \int_{\mathbb{R}^d \setminus B_{r_0} (\lambda)} | \widehat{\mathds{1}_S} (y - \lambda) |^2 \; dy \bigg)^{1/2} \|  h_{\lambda} \| \\
&\quad \quad \leq \varepsilon C.
\end{align*}
Therefore, 
\begin{align} \label{eq:sum_tail_estimate}
 \sum_{\lambda \in \Lambda \cap B_r (x)} \bigg| \int_{\mathbb{R}^d \setminus B_{r+r_0} (x)}  (\mathcal{F} e_{\lambda}) (y) \overline{( \mathcal{F} P_{\Lambda \cap B_r (x)} h_{\lambda})(y)} dy \bigg| \leq \varepsilon C \# (\Lambda \cap B_r (x)). 
\end{align}

Combining the estimates \eqref{eq:estimate_firstterm} and \eqref{eq:sum_tail_estimate}, together with the estimate of \eqref{eq:estimate_difference}, we obtain
\begin{align*}
&\bigg| \frac{\#(\Lambda \cap B_r (x))}{|B_r(x)|} - \frac{1}{|B_r(x)|} \int_{B_r(x)} \| P_{\Lambda \cap B_r (x)} e_y \|^2 \; dy \bigg| \\
&\quad \quad \leq \frac{|B_{r+r_0} (x) \setminus B_{r} (x)|}{|B_r(x)|} |S| + \varepsilon C \frac{\# (\Lambda \cap B_r (x))}{|B_r (x)|}.
\end{align*}
Hence, taking the supremum over $x \in \mathbb{R}^d$ and then taking the limit supremum as $r \to \infty$, we obtain that
\[ \limsup_{r \to \infty} \sup_{x \in \mathbb{R}^d} \bigg| \frac{\#(\Lambda \cap B_r (x))}{|B_r(x)|} - \frac{1}{|B_r(x)|} \int_{B_r(x)} \| P_{\Lambda \cap B_r (x)} e_y \|^2 \; dy \bigg| \leq \varepsilon D^+ (\Lambda) .\]
Since $D^+ (\Lambda) < \infty$ and since $\varepsilon > 0$ was chosen arbitrary, this yields that
\[ \limsup_{r \to \infty} \sup_{x \in \mathbb{R}^d} \bigg| \frac{\#(\Lambda \cap B_r (x))}{|B_r(x)|} - \frac{1}{|B_r(x)|} \int_{B_r(x)} \| P_{\Lambda \cap B_r (x)} e_y \|^2 \; dy \bigg| = 0 ,\]
which easily yields the claim.
\end{proof}

The following lemma provides a continuous analogue of the fact that discrete sets $\Lambda \subseteq \R^d$ are not relatively dense if and only if $D^-(\Lambda) = 0$.

\begin{lemma}\label{lem:continuous-relatively-dense}
Let $Q \subseteq \R^d$ be measurable. Then the following are equivalent:
\begin{enumerate}[(i)]
    \item \[ \liminf_{r \to \infty} \inf_{x \in \R^d} \frac{| Q \cap B_r(x)|}{|B_r|} = 0. \]
    \item For every $r > 0$ and $\delta > 0$, there exists $x \in \R^d$ such that
    \[ | Q \cap B_r(x)| < \delta . \]
\end{enumerate}
\end{lemma}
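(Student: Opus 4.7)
The plan is to prove the two implications separately, handling the easy one by rescaling and the hard one by an averaging (Fubini) argument that extracts a small ball with little $Q$-mass from a large ball with small relative $Q$-mass.

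\emph{The easy direction (ii) $\Rightarrow$ (i).} Fix $\delta' > 0$ and $r > 0$. Apply (ii) with the parameters $r$ and $\delta := \delta' |B_r|$ to obtain $x \in \R^d$ with $|Q \cap B_r(x)| < \delta' |B_r|$. Dividing by $|B_r|$ and taking the infimum over $x$, we see that $\inf_{x \in \R^d} |Q \cap B_r(x)|/|B_r| \leq \delta'$ for every $r > 0$, and hence the liminf in (i) is $\leq \delta'$. As $\delta' > 0$ was arbitrary, (i) follows.

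\emph{The hard direction (i) $\Rightarrow$ (ii).} This is where the real work lies. Fix $r_0 > 0$ and $\delta_0 > 0$; we seek $x \in \R^d$ with $|Q \cap B_{r_0}(x)| < \delta_0$. Choose $\varepsilon > 0$ small, to be fixed later. By (i), there exists $R > r_0$ (which we may take arbitrarily large) and a point $x_R \in \R^d$ such that $|Q \cap B_R(x_R)| < \varepsilon |B_R|$. The key step is to average the quantity $|Q \cap B_{r_0}(y)|$ over $y$ in the ball $B_{R - r_0}(x_R)$. Using Fubini's theorem and the observation that if $y \in B_{R-r_0}(x_R)$ and $|z - y| < r_0$, then $z \in B_R(x_R)$, one gets
\begin{align*}
    \int_{B_{R-r_0}(x_R)} |Q \cap B_{r_0}(y)| \, dy
    &= \int_{\R^d} \mathds{1}_Q(z) \bigl| B_{R-r_0}(x_R) \cap B_{r_0}(z) \bigr| \, dz \\
    &\leq |B_{r_0}| \cdot |Q \cap B_R(x_R)| < \varepsilon \, |B_{r_0}| \, |B_R|.
\end{align*}
Dividing by $|B_{R-r_0}|$, the average of $|Q \cap B_{r_0}(y)|$ over $y \in B_{R-r_0}(x_R)$ is less than $\varepsilon |B_{r_0}| \cdot |B_R|/|B_{R-r_0}|$. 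Since $r_0$ is fixed and we are free to choose $R$ arbitrarily large, the ratio $|B_R|/|B_{R-r_0}|$ tends to $1$; in particular we can arrange $|B_R|/|B_{R-r_0}| < 2$. Choosing initially $\varepsilon < \delta_0/(2|B_{r_0}|)$, this average is strictly less than $\delta_0$, so by a standard averaging argument there must exist $y \in B_{R-r_0}(x_R)$ with $|Q \cap B_{r_0}(y)| < \delta_0$. Taking $x := y$ proves (ii).

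The main (and only) obstacle is the (i) $\Rightarrow$ (ii) direction: one must convert an asymptotic density statement into a concrete statement about one fixed radius $r_0$. The Fubini identity above is precisely the bridge between the two scales — it reduces the problem to a volume comparison between $B_R$ and $B_{R-r_0}$, which is favorable as $R \to \infty$.
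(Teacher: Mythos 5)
Your proposal is correct and rests on the same core computation as the paper's proof: the Fubini interchange bounding $\int |Q \cap B_{r_0}(y)|\,dy$ by $|B_{r_0}|\,|Q\cap B_R(\cdot)|$ together with the volume comparison $|B_R|/|B_{R-r_0}|\to 1$. The only difference is that you argue (i) $\Rightarrow$ (ii) directly via an averaging/Chebyshev step, whereas the paper proves the contrapositive; this is a cosmetic distinction.
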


\begin{proof}
 We prove that (i) implies (ii) by contraposition. Hence, assume there exists $r_0, \delta_0 > 0$ such that
\begin{align} \label{eq:contrapositive}
| Q \cap B_{r_0}(x) | \geq \delta_0 \quad \text{ for all } x \in \R^d.  \end{align}
For $r > 0$ and $x, y, z \in \mathbb{R}^d$, we have the following inequality of indicator functions:
\begin{align} \label{eq:claim} \mathds{1}_{B_{r_0} (x)}(z) \mathds{1}_{B_r(y)}(x) \leq \mathds{1}_{B_{r+r_0}(y)}(z) \mathds{1}_{B_{r_0}(z)}(x).
\end{align}
For fixed $r > 0$ and $y \in \mathbb{R}^d$, integrating the inequality \eqref{eq:contrapositive} over $B_r (y)$ and using \eqref{eq:claim} gives
\begin{align*} 
    \delta_0 |B_r| 
    &\leq \int_{B_r(y)} |Q \cap B_{r_0}(x)| \dif{x} \\
    &= \int_{\R^d} \int_{Q} \mathds{1}_{B_{r_0}(x)}(z) \mathds{1}_{B_r(y)}(x) \dif{z} \dif{x} \\
    &\leq \int_{Q} \int_{\R^d} \mathds{1}_{B_{r+r_0}(y)}(z) \mathds{1}_{B_{r_0}(z)}(x) \dif{x} \dif{z} \\
    &= |B_{r_0}| |Q \cap B_{r+r_0}(y)| .
\end{align*}
Hence, for $r>0$ and $y \in \mathbb{R}^d$,
\[ \frac{|Q \cap B_{r+r_0}(y)|}{|B_r|} \geq \frac{\delta_0}{|B_{r_0}|}  \]
Since $|B_{r+r_0}|/|B_r| \to 1$ as $r \to \infty$, taking the infimum over $y \in \mathbb{R}^d$ and taking the limit infimum as $r \to \infty$, it follows that (i) cannot hold.

The fact that (ii) implies (i) is immediate.
\end{proof}

Using the previous results, we now prove the following theorem. 

\begin{theorem}\label{thm:riesz-sequence-to-riesz-basis}
Let $S \subseteq \mathbb{R}^d$ be a set of finite measure and let $E(\Lambda)$ be a Riesz sequence in $L^2(S)$. Then the following assertions are equivalent:
\begin{enumerate}[(i)]
    \item There exists $\Gamma \in W(\Lambda)$ such that $E(\Gamma)$ is a Riesz basis for $L^2(S)$.
    \item $D^+(\Lambda) = |S|$.
    \item For every $\varepsilon > 0$, $\delta > 0$ and $r > 0$, there exists $x \in \R^d$ such that
    \[ | \{ y \in B_r(x) : \| (I - P_\Lambda) e_y \| \geq \varepsilon \} | < \delta . \]
\end{enumerate}
\end{theorem}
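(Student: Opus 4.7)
The plan is to establish the theorem via the circular chain of implications (i) $\Rightarrow$ (ii) $\Rightarrow$ (iii) $\Rightarrow$ (i), mirroring the structure of the frame version \Cref{thm:frame-to-riesz-basis}. Note that $\Lambda$ is automatically separated, since $E(\Lambda)$ is a Riesz sequence, so the hull-theoretic results of the preceding sections apply. The implication (i) $\Rightarrow$ (ii) is immediate: if $\Gamma \in W(\Lambda)$ yields a Riesz basis, then the necessary density condition for Riesz bases combined with the inequality $D^+(\Gamma) \leq D^+(\Lambda)$ and with the necessary upper bound $D^+(\Lambda) \leq |S|$ give $|S| = D^+(\Gamma) \leq D^+(\Lambda) \leq |S|$.

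For (ii) $\Rightarrow$ (iii), the key ingredient is \Cref{prop:riesz-sequence-density}, which, after subtracting both sides from $|S|$, can be written as
\[ |S| - D^+(\Lambda) = \liminf_{r \to \infty} \inf_{x \in \R^d} \frac{1}{|B_r(x)|} \int_{B_r(x)} \bigl( |S| - \|P_{\Lambda \cap B_r(x)} e_y\|^2 \bigr) \dif{y}. \]
The monotonicity $\|P_{\Lambda \cap B_r(x)} e_y\| \leq \|P_\Lambda e_y\|$ together with the Pythagorean identity $|S| - \|P_\Lambda e_y\|^2 = \|(I - P_\Lambda) e_y\|^2$ then shows that the assumption $D^+(\Lambda) = |S|$ forces
\[ \liminf_{r \to \infty} \inf_{x \in \R^d} \frac{1}{|B_r(x)|} \int_{B_r(x)} \|(I - P_\Lambda) e_y\|^2 \dif{y} = 0. \]
An application of Chebyshev's inequality bounds the Lebesgue measure of the bad set $\{y \in B_r(x) : \|(I - P_\Lambda) e_y\| \geq \varepsilon\}$ by $\varepsilon^{-2}$ times this average, and \Cref{lem:continuous-relatively-dense} then converts this density-type statement into condition (iii).

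The principal content lies in (iii) $\Rightarrow$ (i). For each $n \in \N$, use (iii) to choose $x_n \in \R^d$ with $|\{y \in B_n(x_n) : \|(I - P_\Lambda) e_y\| \geq 1/n\}| < 1/n$, and extract a subsequence so that $\Lambda - x_n \to \Gamma$ weakly for some $\Gamma \in W(\Lambda)$. By \Cref{cor:stability_weaklimits} (ii), $E(\Gamma)$ is a Riesz sequence, so it remains to show that $\Span E(\Gamma) = L^2(S)$. The covariance \eqref{eq:covariance_projection} translates the condition on $x_n$ to $|\{y \in B_n(0) : \|(I - P_{\Lambda - x_n}) e_y\| \geq 1/n\}| < 1/n$. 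Fix $z \in \R^d$ and $\varepsilon > 0$. Strong continuity of $y \mapsto e_y$ in $L^2(S)$ furnishes $\delta > 0$ with $\|e_y - e_z\| < \varepsilon/2$ for $y \in B_\delta(z)$; for all sufficiently large $n$, one has $B_\delta(z) \subseteq B_n(0)$, $1/n < \varepsilon/2$, and $|B_\delta(z)| > 1/n$, so the good subset $\{y \in B_\delta(z) : \|(I - P_{\Lambda - x_n}) e_y\| < 1/n\}$ has positive measure. Picking any $y$ in it and applying the triangle inequality yields $\|(I - P_{\Lambda - x_n}) e_z\| < \varepsilon$, so $P_{\Lambda - x_n} e_z \to e_z$. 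Combined with $P_{\Lambda - x_n} e_z \to P_\Gamma e_z$ from \Cref{prop:sot} (iii), this forces $P_\Gamma e_z = e_z$ for every $z \in \R^d$, and totality of $\{e_z : z \in \R^d\}$ in $L^2(S)$ gives $\Span E(\Gamma) = L^2(S)$.

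The main obstacle is precisely this last step: upgrading the "almost-everywhere small" defect guarantee of (iii) to a genuine pointwise bound $\|(I - P_\Gamma) e_z\| = 0$ at an arbitrary $z$. The argument succeeds because strong continuity of modulation in $L^2(S)$ is uniform enough that a single good point in a tiny neighbourhood of $z$ controls the defect at $z$ itself, whereupon strong-operator convergence of $P_{\Lambda - x_n}$ supplied by \Cref{prop:sot} (iii) delivers the conclusion.
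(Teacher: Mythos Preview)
Your argument is correct and follows essentially the same route as the paper: the same circular chain, the same use of \Cref{prop:riesz-sequence-density} together with the monotonicity $\|P_{\Lambda\cap B_r(x)}e_y\|\le\|P_\Lambda e_y\|$ and \Cref{lem:continuous-relatively-dense} for (ii) $\Rightarrow$ (iii), and the same ``find a nearby good point and use strong continuity of modulation plus \Cref{prop:sot}(iii)'' idea for (iii) $\Rightarrow$ (i). The only cosmetic difference in the last step is that the paper fixes $\delta_n$ (from strong continuity) \emph{before} choosing $x_n$ and takes $|B_{\delta_n}(0)|$ as the measure bound, whereas you take the bound $1/n$ and pick $\delta$ afterwards for each fixed $z$; both versions work for the same reason.
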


\begin{proof}
Let $\Lambda \subseteq \mathbb{R}^d$ be such that $E(\Lambda)$ is a Riesz sequence. Recall that $\Lambda$ must be separated and $D^+ (\Lambda) \leq |S|$. Throughout the proof, we use the notation
\[ Q_\varepsilon := \big\{ y \in \R^d : \| (I - P_\Lambda) e_y \| \geq \varepsilon \big\}   \]
for $\varepsilon > 0$.
\\~\\
(i) $\Rightarrow$ (ii).
If $\Gamma \in W(\Lambda)$ is such that $E(\Gamma)$ is a Riesz basis for $L^2 (S)$, then $D^+ (\Gamma) = |S|$. In addition, we have that $D^+(\Gamma) \leq D^+(\Lambda)$. Hence, we conclude that $D^+(\Lambda) = |S|$.
\\~\\
(ii) $\Rightarrow$ (iii). 
Suppose that  $D^+(\Lambda) = |S|$. Using \Cref{prop:riesz-sequence-density} and the inequality $\| P_{\Lambda \cap B_r(x)} e_y \|^2 \leq \| P_\Lambda e_y \|^2$ for $x,y \in \mathbb{R}^d$ and $r>0$, we obtain that
\[ \liminf_{r \to \infty} \inf_{x \in \R^d} \frac{1}{|B_r(x)|} \int_{B_r(x)} \| (I- P_{\Lambda}) e_y \|^2 \dif{y} = 0 . \]
In particular, for every $\varepsilon > 0$, we have that
\[ \liminf_{r \to \infty} \inf_{x \in \R^d} \frac{|Q_\varepsilon \cap B_r(x)| }{|B_r(x)|} \geq 0.  \]
By \Cref{lem:continuous-relatively-dense}, this implies (iii).
\\~\\
(iii) $\Rightarrow$ (i). For each $n \in \mathbb{N}$, the strong continuity of modulation implies the existence of $0 < \delta_n < 1$ such that $\| e_z - e_w \| < 1/n$ whenever $|z-w| < \delta_n$. By the assumption of assertion (iii), we may find $x_n \in \R^d$ such that
\begin{equation}
     |Q_{1/n} \cap B_n(x_n) | < |B_{\delta_n}(0)| . \label{eq:inequality-contradict}
\end{equation}
Passing to a subsequence if necessary, we may assume that $\{ \Lambda  - x_n \}_{n \in \N}$ converges weakly to a set $\Gamma \in W(\Lambda)$. Then $P_{ \Lambda - x_n}$ converges in the strong operator topology to $P_{\Gamma}$ by \Cref{prop:sot} (iii). 

Let $x \in \mathbb{R}^d$. Choose $n_0 \in \N$ such that $x \in B_{n-1}(0)$ for all $n \geq n_0$, and set $y_n := x + x_n \in B_{n-1}(x_n)$ for $n \geq n_0$.
We claim that $\mathrm{dist}(y_n, (Q_{1/n})^c) \leq \delta_n$ for all $n \geq n_0$. If not, then $\mathrm{dist}(y_n, (Q_{1/n})^c) > \delta_n$ implies that $B_{\delta_n}(y_n) \subseteq Q_{1/n}$. 
Since $y_n \in B_{n-1}(x_n)$ for all $n \geq n_0$, we also have that $B_{\delta_n}(y_n) \subseteq B_{\delta_n + n-1}(x_n) \subseteq B_n(x_n)$ for all $n \geq n_0$. Therefore, it follows that $B_{\delta_n}(y_n) \subseteq Q_{1/n} \cap B_{n}(x_n)$, and hence
\[ 
|B_{\delta_n}(0)| \leq |Q_{1/n} \cap B_n(x_n)|, 
\]
which would contradict \eqref{eq:inequality-contradict}. We conclude that $\mathrm{dist}(y_n,(Q_{1/n})^c) \leq \delta_n$ for each $n \geq n_0$. Using this, for $n \geq n_0$, we can find $z_n \in \R^d$ with $\| (I - P_\Lambda) e_{z_n} \| < 1/n$ and such that $|y_n - z_n| \leq \delta_n$. Hence, a direct calculation using the identity \eqref{eq:covariance_projection} entails
\begin{align*}
    \| (I - P_{\Gamma})e_{x} \| &= \lim_{n \to \infty} \| (I - P_{\Lambda - x_n}) e_x \| = \lim_{n \to \infty} \| (I - P_{\Lambda}) e_{y_n} \| \\
    &\leq \lim_{n \to \infty} \big(  \| (I - P_\Lambda) (e_{y_n} - e_{z_n}) \| + \| (I - P_\Lambda) e_{z_n} \| \big) \\
    &\leq \lim_{n \to \infty} \big( \frac{1}{n} + \frac{1}{n} \big) = 0 .
\end{align*}

Since $x \in \mathbb{R}^d$ was chosen arbitrary and $\{ e_x : x \in \R^d \}$ is complete in $L^2(S)$, it follows that $P_\Gamma = I$, hence $E(\Gamma)$ is a Riesz basis for $L^2(S)$.
\end{proof}

\section*{Acknowledgements}
The second author thanks Lukas Liehr for sharing the reference \cite{olevskii2013sampling}.
For J. v. V., this research was funded in whole or in part by the
Austrian Science Fund (FWF): 10.55776/PAT2545623.

\bibliographystyle{abbrv}
\bibliography{bibl}

\end{document}